\def\phi{\varphi}
\def\a{\alpha}
\def\b{\beta}
\def\g{\gamma}
\def\G{\Gamma}
\def\l{\lambda}
\def\L{\Lambda}
\def\sp{{\rm span\,}}
\def\Aut{{\sf Aut\,}}
\def\1#1{\overline{#1}}
\def\2#1{\widetilde{#1}}
\def\3#1{\widehat{#1}}
\def\4#1{\mathbb{#1}}
\def\5#1{\mathfrak{#1}}
\def\6#1{{\mathcal{#1}}}
\def\C{{\4C}}
\def\R{{\4R}}
\def\T{{\Theta}}
\newtheorem{theorem}{Theorem}[section]
\newtheorem{definition}[theorem]{Definition}
\newtheorem{lemma}[theorem]{Lemma}
\newtheorem{proposition}[theorem]{Proposition}
\theoremstyle{remark}
\newtheorem{example}[theorem]{Example}
\newtheorem{remark}[theorem]{Remark}
\begin{document}
\numberwithin{equation}{section}

\def\bl{\begin{Lem}}
\def\el{\end{Lem}}
\def\bp{\begin{Pro}}
\def\ep{\end{Pro}}
\def\bt{\begin{Thm}}
\def\et{\end{Thm}}
\def\bc{\begin{Cor}}
\def\ec{\end{Cor}}
\def\bd{\begin{Def}}
\def\ed{\end{Def}}
\def\br{\begin{Rem}}
\def\er{\end{Rem}}
\def\be{\begin{example}}
\def\ee{\end{example}}
\def\bpf{\begin{proof}}
\def\epf{\end{proof}}
\def\ben{\begin{enumerate}}
\def\een{\end{enumerate}}
\def\beq{\begin{equation}}
\def\eeq{\end{equation}}

\title[Rigidity of CR maps ]{Rigidity of CR maps between Shilov boundaries of bounded symmetric
domains}

\author[S.-Y. Kim \& D. Zaitsev]{Sung-Yeon Kim* and Dmitri Zaitsev**}
\address{S.-Y. Kim: Department of Mathematics Education, Kangwon National University, 123 Hyoja-dong, Chuncheon, Kangwon-do, 200-701, Korea }
\email{sykim87@kangwon.ac.kr}
\address{D. Zaitsev: School of Mathematics, Trinity College Dublin, Dublin 2, Ireland}
\email{zaitsev@maths.tcd.ie}

\thanks{*This research was supported by Basic Science Research Program through the National Research Foundation of Korea(NRF) funded by the Ministry of Education, Science and Technology (grant number 2009-0067947)}
\thanks{**Supported in part by the Science Foundation Ireland grant 10/RFP/MTH2878.}

\keywords{bonded symmetric domains, symmetric CR manifold, CR embedding, complete system, totally geodesic embedding}
\subjclass[2000]{
32V40, 32V30, 32V20,
32M05, 53B25, 35N10}

\def\Label#1{\label{#1}}

\begin{abstract}
Our goal is to establish what seems to be the first rigidity result
for CR embeddings between Shilov boundaries of bounded symmetric domains of higher rank.
The result states that any such CR embedding is the standard linear embedding up to CR automorphisms.
Our basic assumption extends precisely the well-known optimal bound for the rank one case.
There are no other restrictions on the ranks, in particular,
the difficult case when the target rank is larger than the source rank is also allowed.
\end{abstract}

\maketitle

\section{Introduction}

Rigidity phenomena for {\em holomorphic isometries}
into complex space forms
go back to Bochner \cite{Bo47} and Calabi \cite{Ca53}
and lead to far going and deeper understanding of {\em metric rigidity}
between general {\em bounded symmetric domains}
in the work of Mok, Mok-Ng and Ng among others.
The reader is referred to the survey by Mok \cite{M11}
for more details, see also the very recent work by Yuan-Zhang \cite{YZ}.
Other important rigidity phenomena for bounded symmetric domains,
such as the strong rigidity of complex structures of their compact quotients
have been discovered by Siu \cite{S80, S81}.

On the other hand, the study of {\em rigidity of holomorphic maps} originated in the work of Poincar\'e \cite{P07} and later Alexander \cite{A74}
for maps sending one open piece of the sphere into another.
It was Webster \cite{W79} who first obtained rigidity 
for holomorphic maps between pieces of spheres of different dimension,
proving that any such map between spheres in $\C^{n}$ and $\C^{n+1}$
is totally geodesic.
Further results in this direction are due to
Faran \cite{Fa86}, Cima-Suffridge \cite{CS83,CS90}, 
Forstneric \cite{F86,F89}
and Huang \cite{H99} who obtained the best known regularity assumption
independent of the dimension difference $n'-n$,
for CR maps between pieces of spheres in $\C^{n+1}$ and $\C^{n'+1}$
under the assumption $n'<2n$. Beyond this bound, 
the rigidity is known to fail as illustrated by the
so-called Whitney map (see e.g. Example 1.1 in \cite{EHZ04}).
 (CR maps are closely related to holomorphic ones,
see e.g.\ \cite{BER99}).
We mention the work by Huang-Ji \cite{HJ01}, Huang \cite{H03} and Huang-Ji-Xu \cite{HJX06}
dealing with this more difficult case, where rigidity 
has to be replaced by the classification of the maps.
On the other note, further rigidity phonemena for CR
maps between real hypersurfaces and  {\em hyperquadrics} have been discovered by
Ebenfelt-Huang and the second author \cite{EHZ04,EHZ05},
Baouendi-Huang \cite{BH05}, Baouendi-Ebenfelt-Huang \cite{BEH09}
and Ebenfelt-Shroff \cite{ES10}.

However, comparing with metric rigidity mentioned above,
holomorphic rigidity for maps between 
{\em bounded symmetric domains $D$ and $D'$ of higher rank}
remains much less understood.
If the rank $r'$ of $D'$ does not exceed the rank $r$ of $D$
and both ranks $r,r'\ge2$, 
the rigidity of {\em proper holomorphic maps} $f\colon D\to D'$
was conjectured by Mok \cite{M89} and proved by Tsai \cite{T93},
showing that $f$ is necessarily totally geodesic (with respect to the Bergmann metric).

The remaining case $r<r'$ seems to be very hard and only little is known.
Tu \cite{Tu02a,Tu02b} established holomorphic rigidity
respectively in the equidimensional case 
(when he proves that the map is biholomorphic and hence $r=r'$) 
and for maps between Cartan type I bounded symmetric domain $D_{p,p-1}$ and $D_{p,p}$
(see below).
Finally, Mok \cite{M08} proved the nonexistence of proper holomorphic maps
between certain pairs of bounded symmetric domains with arbitrary $r'-r$.

The goal of this paper is to take on the rigidity problem for
{\em locally defined CR embeddings} between Shilov boundaries of
general Cartan type I bounded symmetric domains $D_{p,q}$ of higher rank.
This includes the interesting case $r<r'$.
To the best authors' knowledge all
known results on local CR rigidity deal with maps between 
{\em real hypersurfaces}
and rely heavily on {\em Tanaka-Chern-Moser}  approach \cite{Ta,CM} and many of them also on {\em Tanaka-Webster} connection,
which is unavailable for Shilov boundaries of higher rank.
In this paper we follow a new approach 
going back to the general Cartan's moving frame method.
To compensate for the lack of the power of Tanaka-Chern-Moser normalization,
we introduce a sequence of several subsequent adjustments
of moving frames reaching further and further normalization conditions.
We prove:

\begin{theorem}\label{main}
Let $f$ be a smooth CR embedding between open pieces of Shilov boundaries
of two bounded symmetric domains $D_{p,q}$, $D_{p',q'}$ of Cartan type $\rm I$
with $q<p$, $q'<p'$.
Assume that the rank $q>1$ and 
\begin{equation}\Label{main-ineq}
p'-q'<2(p-q).
\end{equation}
Then 
after composing with suitable automorphisms of $D_{p,q}$ and $D_{p',q'}$,
$f$ is given by the block matrix
$$z\mapsto
\begin{pmatrix}
z & 0 \\
0 & I\\
0 & 0
\end{pmatrix}.
$$
\end{theorem}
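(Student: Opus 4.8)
The plan is to pass to the flat Cayley model of the Shilov boundary and run a moving-frame prolongation in the spirit of Huang's work on spheres. Via a Cayley transform I would realize a neighbourhood of a point of the Shilov boundary $S_{p,q}$ of $D_{p,q}$ as an open piece of the real-analytic quadric
\[
\6Q_{p,q}=\{(z,w)\in M_{(p-q)\times q}(\C)\times \mathrm{Herm}_q : \operatorname{Im} w=z^{*}z\},
\]
a Levi-nondegenerate generic CR submanifold of CR dimension $q(p-q)$ and CR codimension $q^{2}$, whose CR automorphism group acts transitively, the isotropy at $0$ acting on the CR tangent space through a $GL_{q}(\C)\times GL_{p-q}(\C)$-type linear part, dilations, and a ``Heisenberg-fractional-linear'' part. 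Composing $f$ with automorphisms of $D_{p,q}$ and $D_{p',q'}$ I may assume $f(0)=0$; standard arguments show that the embedding is CR-transversal, and the isotropy representations on source and target then allow normalization of a low-order jet of $f$ at $0$ so as to agree with the standard embedding. Writing $f=(g,\phi)$ with $g$ valued in $M_{(p'-q')\times q'}(\C)$ and $\phi$ valued in $\mathrm{Herm}_{q'}$, the CR-invariance identity $\operatorname{Im}\phi=g^{*}g$ complexifies, on the Segre family $\{\,w=\eta+2i\,\xi z\,\}$ of $\6Q_{p,q}$, to the basic identity
\[
\frac{1}{2i}\big(\phi(z,w)-\phi^{\star}(\xi,\eta)\big)=g^{\star}(\xi,\eta)\,g(z,w),
\]
where $(\cdot)^{\star}$ conjugates and transposes Taylor coefficients; this identity is the engine of the whole argument.

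Differentiating the basic identity and separating the holomorphic ($z$-) and antiholomorphic ($\xi$-) jet directions produces, order by order, tensorial relations among the derivatives of $g$ and $\phi$ at $0$. The crucial and hardest step is to show that the second fundamental form of $f$ — the quadratic part of $g$ modulo the image of its linear part — vanishes identically. This is precisely where hypothesis \eqref{main-ineq} enters: freezing all but one column of $z$ and the corresponding diagonal entry of $w$ cuts out a copy of the sphere $S^{2(p-q)+1}$ inside $\6Q_{p,q}$, and, using the already-normalized jet of $f$, the restriction of $f$ maps it into a sub-sphere $S^{2(p'-q')+1}$ of $S_{p',q'}$; Huang's Hermitian-form lemma then forces the second fundamental form along that slice to vanish because $p'-q'<2(p-q)$, while the off-diagonal entries of $w$, present exactly because $q>1$, couple the various slices and upgrade this to vanishing of the full second fundamental form. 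I expect this to be the main obstacle: there is no Chern--Moser normal form available here, the CR codimension $q^{2}$ makes the tensorial bookkeeping heavy, and the genuinely new feature of the case $q'>q$ is that the target carries strictly more $w'$-directions than the source, so the off-diagonal blocks of $\phi$ and the non-transversal part of $g$ must be pinned down by a separate, more delicate elimination.

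With the second fundamental form eliminated, a finite sequence of further adjustments of the moving frame — each one normalizing one more family of coefficients of the frame and thereby determining one more order of the Taylor expansion of $f$ — shows that $f$ satisfies a complete system: all partial derivatives of $(g,\phi)$ at every point are expressed through the already-normalized low-order jet of $f$ at that point. Consequently $f$ is real-analytic and is uniquely determined among CR embeddings by that jet at $0$. Since the standard linear embedding is itself such a CR embedding of $\6Q_{p,q}$ into $\6Q_{p',q'}$ and, after the normalizations of the first step, shares the same normalized jet at $0$ with $f$, uniqueness for the complete system forces $f$ to coincide with the standard embedding in a neighbourhood of $0$, hence on the whole connected open piece by real-analyticity. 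Transporting this back through the Cayley transforms and the normalizing automorphisms yields the asserted block-matrix form.
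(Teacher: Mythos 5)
Your overall architecture (normalize a low-order jet, kill the second fundamental form using the bound \eqref{main-ineq}, then close a complete system and invoke uniqueness against the standard embedding) parallels the paper's strategy of successive frame normalizations followed by a complete first-order system, but the engine you propose is different — Cayley model, Segre-family ``basic identity'' and Huang's Hermitian-form lemma instead of adapted Grassmannian frames — and the central steps of your version are not established. The decisive gap is the assertion that, after jet normalization, $f$ restricted to a slice $\{\operatorname{Im}w_{\alpha\alpha}=|z_\alpha|^2,\ \text{other entries}=0\}\cong S^{2(p-q)+1}$ maps into a sub-sphere $S^{2(p'-q')+1}$ of $S_{p',q'}$. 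A CR map from a sphere into $S_{p',q'}$ need not have image in any sphere slice (compare the paper's Example 2.3: block-diagonal maps built from several proper ball maps of the same variable), and normalizing a finite jet at one point cannot force such a containment; proving it is essentially equivalent to the rigidity statement itself. Without it, Huang's lemma — a statement about a single scalar Hermitian identity — has no direct purchase on the matrix-valued identity $\frac{1}{2i}(\phi-\phi^{\star})=g^{\star}g$, whose off-diagonal and extra ($q'>q$) components are exactly where the difficulty lies. The paper extracts the needed scalar information differently: it works with the diagonal contact components $\Phi_a^{~a}$ and a positivity/orthogonality count on the Levi form (Lemma 4.2 and the inductive determination of $\Phi_a^{~b}$), where $n'-n<n$ is used to show certain systems of $n$ mutually orthogonal vectors in $\C^{n'-n}$ must vanish; your ``coupling of slices through the off-diagonal $w$-entries'' is the analogue of that induction, but you give no argument for it.

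Two further points would need real work even granting the second-fundamental-form step. First, the genuinely new higher-rank feature — the surplus target directions when $q'>q$ — is precisely what the paper resolves in Proposition 7.1, showing the image lies in $Gr(V_1,q)\oplus V_2$ with $V_2$ a $(q'-q)$-dimensional null subspace, via a complete linear system for the frame coefficients; in your sketch this is acknowledged only as ``a separate, more delicate elimination,'' i.e.\ left open, and your final uniqueness argument presupposes it. Second, the Segre-family complexification you rely on requires real-analyticity of $f$, while the theorem assumes only smoothness; you would have to either work formally and then prove convergence/analyticity, or establish analyticity beforehand — an issue the paper's frame-and-ODE method avoids entirely, and which also makes the concluding appeal to ``coincidence on the whole connected piece by real-analyticity'' circular as stated. (The paper instead identifies the image with a linearly embedded $S_{p,q}$ and concludes by the extension theorems of Kaup--Zaitsev.)
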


Note that the assumptions $q<p$ and $q'<p'$ exclude precisely
the cases of square matrices, where one of the Shilov boundaries is totally real
and consequently CR maps are trivial. 
Furthermore, our basic assumption \eqref{main-ineq} corresponds precisely to 
the {\em optimal bound} $n'<2n$  mentioned before in the rank $1$ case ($q=q'=1$) of maps between spheres, where $n=p-1$ and $n'=p'-1$
are the CR dimensions of the spheres. 

\medskip
{\bf Acknowledgement.}
The authors thank the anonymous referee for 
careful reading and helpful remarks.

\section{Preliminaries and adapted frames}\Label{preli}
Throughout this paper we adopt the Einstein summation convention unless mentioned otherwise.
However, if two equal indices appear at the same letter,
e.g.\ $\Phi_{a}^{~a}$, no summation is assumed.
We shall also follow the convention that 
small Greek indices $\a,\b,\g,\delta$ run over $\{1,\ldots,q\}$,
small Latin indices $i, j, k, l$ over $\{1,\ldots,n\}$, 
small Latin indices $a,b,c,d$ over $\{1,\ldots, q'\}$
and large Latin indices $I, J, K, L$ over $\{1,\ldots,n'\}$.
\medskip

Recall that $D_{p,q}$ has the standard realization in the space $\C^{p\times q}$
of $p\times q$ matrices, given by
\begin{equation*}
{D}_{p,q}:=\{z\in \mathbb{C}^{p\times q}:I_q-z^*z \text{ is positive definite} \},
\end{equation*}
where $I_q$ is the identity $q\times q$ matrix and $z^*=\bar z^t$.
The Shilov boundary of $D_{p,q}$ is given by
\begin{equation*}
S_{p,q}=\{z\in \mathbb{C}^{p\times q}:I_q-z^*z=0\}.
\end{equation*}
In particular, $S_{p,q}$ is a symmetric CR manifold of CR dimension $(p-q)\times
q$ in the terminology of \cite{KZ00}. 
For $q=1$, $S_{p,1}$ is the unit sphere in
$\mathbb{C}^p$.
We shall always assume $p>q$ so that $S_{p,q}$
has positive CR dimension, i.e.\ not totally real.

\be\Label{whitney}
The following  generalization of the 
well-known Whitney map
\begin{equation}\Label{whit}
\begin{pmatrix}
z_{11} & \cdots & z_{1q}\cr 
\vdots & \ddots & \vdots\cr
z_{p1} & \cdots & z_{pq}
\end{pmatrix}
\mapsto
\begin{pmatrix}
z_{11} & \cdots & z_{1q'} & 0 & \cdots  &0\cr
\vdots & \ddots & \vdots & \vdots &\ddots & \vdots\cr
z_{p-1,1} & \cdots & z_{p-1,q'} & 0 & \cdots  &0\cr
z_{11}z_{p1} & \cdots & z_{11}z_{pq'} & 0 & \cdots  &0\cr
\vdots & \ddots & \vdots & \vdots &\ddots & \vdots\cr
z_{p1}z_{p1} & \cdots & z_{p1}z_{pq'} & 0 & \cdots  &0\cr
0 & \cdots  &0 &1 & \cdots  &0 \cr
\vdots & \ddots & \vdots & \vdots &\ddots & \vdots\cr
0 & \cdots  &0 &0 & \cdots  &1 \cr
\end{pmatrix}
\in \C^{(p+m)\times (q'+m)}
\end{equation}
restricts to a CR map between the Shilov boundaries,
where $1\le q'\le q$ and $m$ is arbitrary.
This map is not injective in $D_{p,q}$
and hence is not linear after composing
with any automorphisms of $D_{p,q}$ and $D'_{p',q'}$.
For $	q=q'=1$, $m=0$, this is the classical Whitney 
proper map between unit balls in $\C^{p}$ and $\C^{2p-1}$ respectively,
which corresponds to the equality in (\ref{main-ineq})
showing that the latter is an optimal bound.
\ee

\be
The following examples show that there are lots of CR maps between Shilov boundaries
for any choices of ranks $q$ and $q'$.
Fix a collection of proper maps $\phi_{1}, \ldots, \phi_{q'}$
from the unit ball in $\C^{p}$
into unit balls in $\C^{m_{1}},\ldots,\C^{m_{q'}}$ respectively
for any choice of integers $m_{1},\ldots,m_{q'}$.
For any $q$, and any choice of integers $j_{1},\ldots,j_{q'}\in \{1,\ldots,q\}$,
define 
$$\Phi\colon \C^{p\times q} \to \C^{(m_{1}+\ldots+m_{q'})\times q'},$$
such that $\Phi(Z)$ is the block-diagonal matrix
with entries $\phi_{1}(z_{j_{1}}),\ldots,\phi_{n}(z_{j_{q'}})$
on the diagonal.
Then $\Phi$ restricts to a CR map between Shilov boundaries
of the corresponding bounded symmetric domains.
%
\ee


Let $\Aut(S_{p,q})$ be the Lie group of all CR automorphisms of $S_{p,q}$.
By \cite[Theorem~8.5]{KZ00},
every $\phi\in \Aut(S_{p,q})$ extends to a biholomorphic automorphism
of the bounded symmetric domain $D_{p,q}$.
Consider the standard linear inclusion
$$z\mapsto {I_q\choose z}, ~z\in S_{p,q}.$$
Then we may regard
$S_{p,q}$ as a real submanifold in
the Grassmanian $Gr(q,p+q)$
of all $q$-planes in $\C^{p+q}$ and $\Aut(S_{p,q})=\Aut(D_{p,q})$ becomes a
subgroup of the automorphism group of $Gr(q,p+q)$.
In this section we will construct a frame bundle over $S_{p,q}$
associated with the CR structure of $S_{p,q}$ using Grassmannian frames of $Gr(q,p+q)$.

As before, consider the {\em partial CR dimension} $n=p-q$. 
The actual CR dimension of $S_{p,q}$ is $(p-q)q=nq$
and $q=r$ is the rank of the bounded symmetric domain $D_{p,q}$.

For column vectors $u=(u_1,\ldots,u_{p+q})^t$ and
$v=(v_1,\ldots,v_{p+q})^t$ in $\mathbb{C}^{p+q}$,
define the Hermitian inner product by
\begin{equation*}
\langle u,v\rangle:=-(u_1\bar v_1+\cdots+u_q\bar v_q)+(u_{q+1}\bar
v_{q+1}+\cdots+u_{p+q}\bar v_{p+q}).
\end{equation*}
A \emph{Grassmannian frame adapted to} $S_{p,q}$, or simply
$S_{p,q}$-\emph{frame} is a frame $\{ Z_1,\ldots,Z_{p+q}\}$ of
$\mathbb{C}^{p+q}$ with $\det(Z_1,\ldots,Z_{p+q})=1$ such that
\begin{equation}\Label{structure}
\langle Z_\alpha,Z_{q+n+\beta}\rangle=\langle
Z_{q+n+\beta},Z_\alpha\rangle=~\delta_{\alpha\beta},\quad
 \langle Z_{q+j},Z_{q+k}\rangle =\delta_{jk}
\end{equation}
and
\begin{equation}\label{structure2}
\langle Z_\L,Z_\G\rangle=0~\text{   otherwise,   }
\end{equation}
where the capital Greek indices $\L,\G,\Omega$ etc.\ run
from $1$ to $p+q$. We also use the notation
\begin{gather*}
Z:=(Z_1,\ldots,Z_q), \quad
X=(X_1,\ldots,X_n):=(Z_{q+1},\ldots,Z_{q+n}),\quad
Y=(Y_1,\ldots,Y_q):=(Z_{q+n+1}\ldots,Z_{q+p}),
\end{gather*}
so that \eqref{structure} can be rewritten as
\begin{equation}\Label{structure'}
\langle Z_{\a}, Y_{\b}\rangle = \langle Y_{\b}, Z_{\a}\rangle= \delta_{\a\b},
\quad \langle X_{j}, X_{k}\rangle =\delta_{jk},
\end{equation}
i.e.\ the scalar product $\langle\cdot,\cdot\rangle$
in basis $(Z_{\a},X_{j},Y_{\b})$ is given by the matrix
$$
\begin{pmatrix}
0&0& I_{q}\\
0&I_{n}&0\\
I_{q}&0&0\\
\end{pmatrix}.
$$

Let $\mathcal{B}_{p,q}$ be the set of all
$S_{p,q}$-frames. Then $\mathcal{ B}_{p,q}$ can be identified with
$SU(p,q)$ by the left action. The Maurer-Cartan form
$\pi=(\pi_\L^{~\G})$ on $\mathcal{B}_{p,q}$ is given by the equation
\begin{equation}\Label{differential}
dZ_\L=\pi_\L^{~\G}Z_\G,
\end{equation}
where
$\pi$ satisfies the trace-free condition
$$\sum_\L\pi_\L^{~\L}=0$$ and the structure equation
\begin{equation}\Label{struc-eq}
d\pi_\L^{~\G}=\pi_\L^{~\Omega}\wedge\pi_\Omega^{~\G}.
\end{equation}
More explicitly, using the block matrix representation
with respect to the basis $(Z,X,Y)$, we can write
\begin{equation}\Label{pi}
\pi =
\begin{pmatrix}
\pi_{\a}^{~\b} & \pi_{\a}^{~q+j}  & \pi_{\a}^{~q+n+\b}\\
\pi_{q+k}^{~\b} & \pi_{q+k}^{~q+j}  & \pi_{q+k}^{~q+n+\b}\\
\pi_{q+n+\a}^{~\b} & \pi_{q+n+\a}^{~q+j}  & \pi_{q+n+\a}^{~q+n+\b}\\
\end{pmatrix}
= :
\begin{pmatrix}
\psi_{\a}^{~\b} & \theta_{\a}^{~j} & \phi_{\a}^{~\b}\\
\sigma_{k}^{~\b} & \omega_{k}^{~j} & \theta_{k}^{~\b}\\
\xi_{\a}^{~\b} & \sigma_{\a}^{~j} & \3\psi_{\a}^{~\b}\\
\end{pmatrix},
\end{equation}
which satisfies the symmetry relations
\begin{equation}\Label{symmetries}
\begin{pmatrix}
\psi_{\a}^{~\b} & \theta_{\a}^{~j} & \phi_{\a}^{~\b}\\
\sigma_{k}^{~\b} & \omega_{k}^{~j} & \theta_{k}^{~\b}\\
\xi_{\a}^{~\b} & \sigma_{\a}^{~j} & \3\psi_{\a}^{~\b}\\
\end{pmatrix}
=-
\begin{pmatrix}
\3\psi_{\bar\b}^{~\bar\a} & \theta_{\bar j}^{~\bar\a} & \phi_{\bar\b}^{~\bar\a}\\
\sigma_{\bar\b}^{~\bar k} & \omega_{\bar j}^{~\bar k} & \theta_{\bar\b}^{~\bar k}\\
\xi_{\bar\b}^{~\bar\a} & \sigma_{\bar j}^{~\bar\a} & \psi_{\bar\b}^{~\bar\a}\\
\end{pmatrix}
\end{equation}
that follow directly by differentiating \eqref{structure}.

The structure equations \eqref{struc-eq} can be rewritten as
\begin{align}\Label{struct}
d\phi_{\a}^{~\b}&= \psi_{\a}^{~\g}\wedge \phi_{\g}^{~\b}
+\theta_{\a}^{~l}\wedge \theta_{l}^{~\b} + \phi_{\a}^{~\g}\wedge \3\psi_{\g}^{~\b}\\
d\theta_{\a}^{~j}&= \psi_{\a}^{~\g}\wedge \theta_{\g}^{~j}
+\theta_{\a}^{~l}\wedge \omega_{l}^{~j} + \phi_{\a}^{~\g}\wedge \sigma_{\g}^{~j}\\
d\psi_{\a}^{~\b}&= \psi_{\a}^{~\g}\wedge \psi_{\g}^{~\b}
+\theta_{\a}^{~l}\wedge \sigma_{l}^{~\b} + \phi_{\a}^{~\g}\wedge \xi_{\g}^{~\b}\\
d\omega_{k}^{~j}&= \sigma_{k}^{~\g}\wedge \theta_{\g}^{~j}
+\omega_{k}^{~l}\wedge \omega_{l}^{~j} + \theta_{k}^{~\g}\wedge \sigma_{\g}^{~j}\\
d\sigma_{k}^{~\b}&= \sigma_{k}^{~\g}\wedge \psi_{\g}^{~\b}
+\omega_{k}^{~l}\wedge \sigma_{l}^{~\b}
+ \theta_{k}^{~\g}\wedge \xi_{\g}^{~\b}\\
d\xi_{\a}^{~\b}&= \xi_{\a}^{~\g}\wedge \psi_{\g}^{~\b}
+\sigma_{\a}^{~l}\wedge \sigma_{l}^{~\b}
+ \3\psi_{\a}^{~\g}\wedge \xi_{\g}^{~\b},
\end{align}
in particular,
$$d\phi_\alpha^{~\beta}=\theta_\alpha^{~j}\wedge\theta_j^{~\beta}~\text{   mod
}~\phi,$$
where $\phi$ is the span of $\phi_\a^{~\b}$ for all $\a,\b$.

By abuse of notation, we also denote by $Z$
the $q$-dimensional subspace
of $\mathbb{C}^{p+q}$ spanned by $Z_1,\ldots,Z_q$.
Hence $Z$ represents a point in $S_{p,q}$ and vice versa,
any point in $S_{p,q}$ is represented by $Z$
corresponding to an adapted frame $(Z,X,Y)$.
Then $\mathcal{B}_{p,q}$ can be regarded as a
 bundle over ${S}_{p,q}$ via the
projection map $(Z,X,Y)\to Z$.
By another abuse of notation, we shall also use the same letters for the components of $\pi$ and their pullbacks to $S_{p,q}$ via a fixed section.
Note that fixing a section means precisely choosing
an adapted frame $(Z,X,Y)$ at every point $x$ of
(an open subset of) $S_{p,q}$
such that $Z$ represents $x$ as a point in the Grassmanian.

The defining equations
of $S_{p,q}$ can be written as
$$S_{p,q} = \{ [V] \in Gr(q,p+q) : \langle\cdot,\cdot\rangle|_{V}=0 \}$$
and hence their differentiation yields
\begin{equation}\label{diff-eqn}
\langle dZ_\Lambda ,Z_\Gamma\rangle
+\langle
Z_\Lambda,dZ_\Gamma\rangle=0.
\end{equation}
By substituting $dZ_\Lambda=\pi_\Lambda^{~\Gamma}Z_\Gamma$ into $(1,0)$ component of \eqref{diff-eqn} we obtain, in particular,
\begin{equation*}
\phi_\alpha^{~\gamma}\langle Y_\gamma,Z_\beta\rangle~=~
\phi_\alpha^{~\beta}~=~0,
\end{equation*}
when restricted to the $(1,0)$ tangent space.
Comparing the dimensions, we conclude that the kernel of
$\{\phi_\alpha^{~\beta},\alpha,\beta=1,\ldots,q\}$ forms the CR
bundle of $S_{p,q}$, i.e.,
$$\ker(\phi|_Z)=T^{1,0}_Z S_{p,q}\oplus T^{0,1}_Z  S_{p,q}.$$
In other words, $\phi=(\phi_\alpha^{~\beta})$ span the space of contact 
forms on $S_{p,q}$. 
Since $$dZ_\alpha=\psi_\alpha^{~\beta}Z_\beta+\phi_\alpha^{~\beta}Y_\beta+\theta_\alpha^{~j}X_j$$
and $\phi=(\phi_\alpha^{~\beta})$ is a contact form
at $Z=(Z_1,\ldots,Z_q)$, we conlcude that $\phi_{\a}^{~\b}$
and $\theta_{\a}^{~j}$ form together a basis in the space of 
all $(1,0)$ forms.
\medskip

For a change of frame given by
$$
\begin{pmatrix}
\2Z\\
\2X\\
\2Y
\end{pmatrix}
:=U
\begin{pmatrix}
Z\\
X\\
Y
\end{pmatrix}
,$$
$\pi$ changes via
$$\widetilde \pi=dU\cdot U^{-1}+U\cdot\pi\cdot U^{-1}.$$

There are several types of frame changes.

\begin{definition}\Label{changes}
{\rm We call a change of frame}
\begin{enumerate}
\item[i)]change of position {\rm if}
$$
\widetilde Z_\alpha=W_\alpha^{~\beta}Z_\beta,\quad
\widetilde Y_\alpha=V_\alpha^{~\beta}Y_\beta,\quad
\widetilde X_j=X_j,
$$
{\rm where $W=(W_\alpha^{~\beta})$ and $V=(V_\alpha^{~\beta})$ are
$q\times q$ matrices satisfying $V^*W=I_q$};

\item[ii)]change of real vectors {\rm if}
$$
\widetilde Z_\alpha=Z_\alpha,\quad
\widetilde X_j=X_j,\quad
\widetilde Y_\alpha=Y_\alpha+H_\alpha^{~\beta}Z_\beta,
$$
or
\begin{equation}
\begin{pmatrix}
\2Z_{\a}\\
\2X_{j}\\
\2Y_{\a}
\end{pmatrix}
=
\begin{pmatrix}
I_{q} & 0 & 0\\
0 & I_{n} & 0\\
H_{\a}^{~\b}&0& I_{q}
\end{pmatrix}
\begin{pmatrix}
Z_{\b}\\
X_{k}\\
Y_{\b}
\end{pmatrix},
\end{equation}
{\rm where $H=(H_\alpha^{~\beta})$ is a hermitian matrix};

\item[iii)]dilation {\rm if}
$$
\widetilde Z_\alpha=\lambda_{\alpha}^{-1}Z_\alpha,\quad
\widetilde Y_\alpha=\lambda_\alpha Y_\alpha,\quad
\widetilde X_j=X_j,
$$
{\rm where $\lambda_\alpha>0$};

\item[iv)]rotation {\rm if}
$$
\widetilde Z_\alpha=Z_\alpha,\quad
\widetilde Y_\alpha=Y_\alpha,\quad
\widetilde X_j=U_j^{~k}X_k,
$$
{\rm where $(U_j^{~k})$ is a unitary matrix.}
\end{enumerate}
\end{definition}
\medskip

Consider a change of position 
as in Definition~\ref{changes}.
Then
$\phi$ and
$\theta$ change to
$$
\widetilde
\phi_\alpha^{~\beta}=W_\alpha^{~\gamma}\phi_\gamma^{~\delta}W^{*}{}_{\delta}^{~\b},
\quad W^{*}{}_{\delta}^{~\b}=\overline{W_{\beta}^{~\delta}},\quad
\widetilde\theta_\alpha^{~j}=W_\alpha^{~\beta}\theta_\beta^{~j}.
$$

We shall also make use of the change of frame given by
$$
\widetilde Z_\alpha=Z_\alpha,\quad
\widetilde X_j=X_{j} + C_j^{~\beta}Z_\beta,\quad
\widetilde Y_\alpha=Y_\alpha+A_\alpha^{~\beta}Z_\beta+B_\alpha^{~j}X_j,
$$
or
\begin{equation}
\begin{pmatrix}
\2Z_{\a}\\
\2X_{j}\\
\2Y_{\a}
\end{pmatrix}
=
\begin{pmatrix}
I_{q} & 0 & 0\\
C_{j}^{~\b} & I_{n} & 0\\
A_{\a}^{~\b}& B_{\a}^{~j}& I_{q}
\end{pmatrix}
\begin{pmatrix}
Z_{\b}\\
X_{k}\\
Y_{\b}
\end{pmatrix},
\end{equation}
such that
$$C_j^{~\alpha}+B_j^{~\alpha}=0$$
and
$$(A_\alpha^{~\beta} + \overline{A_\beta^{~\alpha}})
+B_\alpha^{~j}B_j^{~\beta}=0,$$
where
$$B_j^{~\alpha}:=\overline{B_\alpha^{~j}}.$$
Then the new frame $(\widetilde Z,\widetilde Y,\widetilde X)$ is an $S_{p,q}$-frame. In fact,
\begin{multline}
0=\langle \2Y_{\a},\2Y_{\b}\rangle=
\langle Y_\alpha+A_\alpha^{~\delta}Z_\delta+B_\alpha^{~j}X_j,
Y_\b+A_\b^{~\g}Z_\g+B_\b^{~k}X_k \rangle \\
=
 A_{\a}^{~\b} \langle Z_{\b}, Y_{\b}\rangle
 + \1{A_{\b}^{~\a}} \langle Y_{\a}, Z_{\a}\rangle
 +\sum_{j}B_{\a}^{~j}\1{B_{\b}^{j}}\langle X_{j}, X_{j}\rangle
 = (A_{\a}^{~\b} + \1{A_{\b}^{~\a}})+ \sum_{j }B_{\a}^{~j}\1{B_{\b}^{j}},
\end{multline}
and
\begin{multline}
0=\langle \2X_{j}, \2Y_{\a}\rangle=
\langle X_{j} + C_j^{~\beta}Z_\beta ,
Y_\alpha+A_\alpha^{~\delta}Z_\delta+B_\alpha^{~k}X_k
\rangle 
=
 C_{j}^{~\a} \langle Z_{\a}, Y_{\a}\rangle
 + \1{B_{\a}^{~j}} \langle X_{j}, X_{j}\rangle
  = C_{j}^{~\a}
 + \1{B_{\a}^{~j}},
\end{multline}
whereas the other scalar products are obviously zero.
Furthermore, we claim that the related $1$-forms $\widetilde\phi_\alpha^{~\beta}$ remain the same, while $\widetilde\theta_\alpha^{~j}$ change to
$$\widetilde\theta_\alpha^{~j}=\theta_\alpha^{~j}-\phi_\alpha^{~\beta}B_\beta^{~j}.$$
Indeed, differentiation yields
$$d\2Z_{\a} =
\2\psi_{\a}^{~\b}\2Z_{\b} + \2\theta_{\a}^{j}\2X_{j} +  \2\phi_{\a}^{~\b} \2Y_{\b}  =
\2\psi_{\a}^{~\b}Z_{\b} + \2\theta_{\a}^{~j}(X_{j} + C_j^{~\beta}Z_\beta) +  \2\phi_{\a}^{~\b} (Y_\b+A_\b^{~\g}Z_\g+B_\b^{~j}X_j)
 $$
$$
=dZ_{\a}=\psi_{\a}^{~\b}Z_{\b} + \theta_{\a}^{j}X_{j} +  \phi_{\a}^{~\b} Y_{\b}
$$
and the claim follows from identifying the coefficients.
\medskip

\section{Cartan's Lemma}
We shall routinely use the Cartan's Lemma
for complex-valued forms:

\begin{lemma}[Cartan's Lemma]
Let $\theta_{1},\ldots,\theta_{r}$
be complex-linearly independent
 complex-valued $1$-forms on a real manifold $M$ 
 and $\phi_{1},\ldots,\phi_{r}$ be further
 complex-valued $1$-forms on $M$ satisfying
\begin{equation}\Label{cartan-eq}
\theta_{1}\wedge\phi_{1}+\ldots+\theta_{r}\wedge\phi_{r}=0.
\end{equation}
Then 
$$\phi_{j}=0 \mod \{ \theta_{1},\ldots, \theta_{r}\}$$
for each $j=1,\ldots,r$.
\end{lemma}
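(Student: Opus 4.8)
The plan is to reduce the complex statement to the classical real Cartan lemma by splitting forms into real and imaginary parts, while keeping track of complex-linear (as opposed to merely real-linear) independence. First I would write each form in terms of a real basis: since $\theta_1,\dots,\theta_r$ are complex-valued and complex-linearly independent, the $2r$ real $1$-forms $\mathrm{Re}\,\theta_j,\mathrm{Im}\,\theta_j$ are real-linearly independent on $M$. This is the key elementary observation: a complex-linear relation $\sum c_j\theta_j=0$ with $c_j\in\C$ is equivalent to the pair of real relations obtained by taking real and imaginary parts, so complex-linear independence of the $\theta_j$ is exactly real-linear independence of the $2r$ real forms $\mathrm{Re}\,\theta_j,\mathrm{Im}\,\theta_j$.

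Next I would expand the hypothesis \eqref{cartan-eq}. Writing $\theta_j=\sigma_j+\ii\t_j$ and $\phi_j=\mu_j+\ii\nu_j$ with $\sigma_j,\t_j,\mu_j,\nu_j$ real, the single complex equation $\sum_j\theta_j\wedge\phi_j=0$ becomes two real equations
$$\sum_j\bigl(\sigma_j\wedge\mu_j-\t_j\wedge\nu_j\bigr)=0,\qquad
\sum_j\bigl(\sigma_j\wedge\nu_j+\t_j\wedge\mu_j\bigr)=0.$$
Adding these after reindexing, or more cleanly adding the first to the second, one gets a single identity of the form $\sum_{k=1}^{2r}\eta_k\wedge\rho_k=0$ where $\eta_1,\dots,\eta_{2r}$ is the real-linearly independent system $\sigma_1,\t_1,\dots,\sigma_r,\t_r$ and each $\rho_k$ is a real linear combination of $\mu_j,\nu_j$; concretely, matching the $\sigma_j$-slot gives $\rho=\mu_j$ paired with $\sigma_j$ and $\rho=\nu_j$ paired with $\t_j$ in the second equation, etc. The cleanest bookkeeping is: the first real equation, together with the skew form of the wedge, lets us apply the real Cartan lemma to the independent family $\{\sigma_j,\t_j\}$, yielding that every $\mu_j$ and $\nu_j$ lies in the real span of $\{\sigma_1,\t_1,\dots,\sigma_r,\t_r\}$.

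Then I would reassemble: since $\mathrm{span}_\R\{\sigma_1,\t_1,\dots,\sigma_r,\t_r\}=\mathrm{span}_\C\{\theta_1,\dots,\theta_r\}$ (a complex span of the $\theta_j$ contains all real and imaginary parts and conversely), each $\phi_j=\mu_j+\ii\nu_j$ lies in the complex span of $\theta_1,\dots,\theta_r$, which is exactly the assertion $\phi_j\equiv 0 \bmod\{\theta_1,\dots,\theta_r\}$. The only point requiring a little care — and the place I would expect a reader to stumble — is combining the two real wedge-identities into one to which the real Cartan lemma applies verbatim: the naive approach gives two identities each involving only an $r$-element independent subfamily, which is not enough to control $\mu_j$ and $\nu_j$ simultaneously; the fix is to use \emph{both} real equations and exploit that $\{\sigma_j,\t_j\}_{j=1}^r$ is a single $2r$-element independent family, writing $\sum_j\theta_j\wedge\phi_j=\sum_j\sigma_j\wedge(\mu_j+\ii\nu_j)+\sum_j\t_j\wedge(-\nu_j+\ii\mu_j)=0$ and reading off that the coefficient form paired with each of the $2r$ independent real forms must vanish modulo that family. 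Everything else is the standard linear-algebra argument underlying the classical Cartan lemma, so I would simply cite it once the complex problem has been recast in real terms.
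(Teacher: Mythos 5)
There is a genuine gap, and it occurs at the very first step. Your ``key elementary observation'' --- that complex-linear independence of $\theta_{1},\ldots,\theta_{r}$ is equivalent to real-linear independence of the $2r$ real forms $\mathrm{Re}\,\theta_{j},\mathrm{Im}\,\theta_{j}$ --- is false. A complex relation $\sum c_{j}\theta_{j}=0$ produces only \emph{coupled} pairs of real relations, whereas a general real relation among the $2r$ real forms has independent coefficients; the two notions coincide only when $\theta_{1},\ldots,\theta_{r},\bar\theta_{1},\ldots,\bar\theta_{r}$ are $\C$-independent, which is strictly stronger than the hypothesis. Concretely, $\theta_{1}=dx$ is complex-linearly independent, yet $\mathrm{Im}\,\theta_{1}=0$, so the family $\{\mathrm{Re}\,\theta_{1},\mathrm{Im}\,\theta_{1}\}$ is dependent and the real Cartan lemma cannot be applied to it. So your reduction only works under an extra assumption the lemma does not make (and which not all of the paper's applications visibly satisfy).

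There is a second error at the reassembly stage: the claimed identity $\mathrm{span}_{\R}\{\mathrm{Re}\,\theta_{j},\mathrm{Im}\,\theta_{j}\}=\mathrm{span}_{\C}\{\theta_{j}\}$ is also false. The complex span of the real and imaginary parts is $\mathrm{span}_{\C}\{\theta_{j},\bar\theta_{j}\}$; e.g.\ for $\theta=dx+\ii\,dy$ one has $dx=\tfrac12(\theta+\bar\theta)\notin\mathrm{span}_{\C}\{\theta\}$. Hence even where your first step is available, what you derive is only $\phi_{j}=0$ mod $\{\theta_{1},\ldots,\theta_{r},\bar\theta_{1},\ldots,\bar\theta_{r}\}$, which is strictly weaker than the lemma's conclusion and would not suffice for the type arguments in the paper, where it matters that the remainder involves the $\theta_{k}$ only and no conjugates. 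The fix is to avoid real--imaginary splitting altogether and argue directly over $\C$, as the paper does: complete $\theta_{1},\ldots,\theta_{r}$ to a basis $\theta_{1},\ldots,\theta_{s}$ of all complex-valued $1$-forms, write $\phi_{j}=c_{j}^{~k}\theta_{k}$, substitute into the hypothesis, and compare the coefficients of $\theta_{j}\wedge\theta_{k}$ with $j\le r<k$ (these products being part of a basis of complex $2$-forms) to get $c_{j}^{~k}=0$ for $k>r$. Cartan's lemma is purely linear algebra over a field and works verbatim over $\C$; no reduction to the real case is needed.
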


\begin{proof}
Complete $\theta_{1},\ldots,\theta_{r}$
to a basis $\theta_{1},\ldots, \theta_{s}$
in the space of all complex-valued $1$-forms on $M$.
Then we can write
$$\phi_{j}=c_{j}^{~k}\theta_{k}$$
for suitable coefficients $c_{j}^{~k}$.
Then substituting into \eqref{cartan-eq}, using the fact
that the set of $\theta_{i}\wedge\theta_{j}$ with $i<j$
is a basis in the space of all $2$-forms,
and identifying coefficients of $\theta_{j}\wedge\theta_{k}$
for $j\le r$, $k>r$, we conlcude 
$$c_{j}^{~k}=0, \quad j\le r<k,$$
and the claim follows.
\end{proof}

\section{Determination of $\Phi_{a}^{~b}$ and $\Theta_{a}^{~J}$
modulo $\phi$ using the Levi form identities.}

Let $p>q$, $p'>q'$ be positive integers and let $f$ be a local CR embedding 
from $S_{p,q}$ into  $S_{p',q'}$ 
Denote by
$\mathcal{B}_{p,q}$ and $\mathcal{B}_{p',q'}$ the Grassmannian frame
bundles adapted to $S_{p,q}$ and $S_{p',q'}$ respectively.
We set $n:=p-q$, $n':=p'-q'$ and follow the index convention 
at the beginning of \S\ref{preli}.

We shall consider the connection forms
$\phi_\alpha^{~\beta}$, $\theta_\alpha^{~j}$, $\psi_\alpha^{~\beta}$, $\omega_j^{~k}$, $\sigma_{j}^{~\b}$, $\xi_{\a}^{~\b}$
on $\mathcal{B}_{p,q}$ pulled back to $S_{p,q}$ 
and  denote by capital letters
$\Phi_a^{~b}$, $\Theta_a^{~J}$, $\Psi_a^{~b}$, $\Omega_{J}^{~K}$, $\Sigma_{K}^{~b}$, $\Xi_{a}^{~b}$ their corresponding counterparts on $\mathcal{B}_{p',q'}$ pulled back to $S_{p',q'}$.
Furthermore, we shall adopt the convention that
any form is assumed to be zero whenever its indices
are out of the range where the form is defined,
e.g. $\theta_{a}^{~J}=0$ if either $a>q$ or $J>n$,
or $\phi_{a}^{~b}=0$ if either $a>q$ or $b>q$.

Since $\phi=(\phi_\alpha^{~\beta})$ and $\Phi=(\Phi_a^{~b})$ are
contact forms on $S_{p,q}$ and $S_{p',q'}$, respectively, the
pull back of $\Phi$ via $f$ is a linear
combination of $\phi=(\phi_{\a}^{~\b})$.

We shall abuse the notation by writing
$\Sigma$ instead of $f^{*}\Sigma$ for any form $\Sigma$ on $S_{p',q'}$.
Thus all our forms will be understood on $S_{p,q}$
and any form on $S_{p',q'}$ will be assumed pulled back to $S_{p,q}$ via the given CR map $f$ without explicit mentioning.

In this section our analysis will be based on using the structure equation
for $\phi$ modulo the ideal generated by the contact forms $\phi_{\a}^{~\b}$, i.e.\ on the equations
\begin{equation}
d\phi_{a}^{~b} = \theta_{a}^{~j}\wedge \theta_{j}^{~b} \mod \phi, \quad
d\Phi_{a}^{~b} = \Theta_{\a}^{~J}\wedge \Theta_{J}^{~b} \mod \phi. \Label{seq}
\end{equation}
By writing identities modulo $\phi$ we shall always mean that the difference between the left- and right-hand sides is contained in the ideal generated by the components $\phi_{\a}^{~\b}$ in the exterior algebra.
In the second identity we have also used the fact mentioned above that
(the pullback of) any $\Phi_{a}^{~b}$ is a linear combination of $\phi_{\a}^{~\b}$.
Note that due to our convention, both sides of the first equation are zero
 if either $a>q$ or $b>q$ and for the same reason the summation is only performed over $j\in\{1,\ldots,n\}$.

\subsection{Determination of $\Phi_{1}^{~1}$}
Consider the diagonal terms $\Phi_a^{~a}$, $a=1,\ldots,q'$.
Suppose that (the pullbacks of) $\Phi_a^{~a}$
vanish identically for all $a$. Then \eqref{seq} yields
$$ 0=d\Phi_a^{~a}= -\sum_{J} \Theta_a^{~J}\wedge  \1{\Theta_a^{~J}}
~\text{    mod    }~\phi.$$
Since each $\Theta_a^{~J}$ is a $(1,0)$ form and each wedge product is non-negative on $(T,\bar T)$ where $T$ is any $(1,0)$ vector, it follows that
$$\Theta_a^{~J}=0~\text{   mod   }~\phi,$$
which contradicts the assumption that $f$ is an embedding.

Hence there exists at least one diagonal
term of $\Phi$ whose pullback does not vanish identically.
Choose such a diagonal term of $\Phi$, say $\Phi_1^{~1}$.
Then on an open set, $\Phi_1^{~1}\neq 0.$
Since the pullback of $\Phi_{1}^{1}$ to $S_{p,q}$ is a contact form, we can write
$$\Phi_1^{~1}= c_\alpha^{~\beta}\phi^{~\alpha}_{\beta}$$
for some smooth functions $c_\alpha^{~\beta}$. Since
$(\phi_\alpha^{~\beta})$ and $(\Phi_a^{~b})$ are antihermitian, the
matrix $(c_\alpha^{~\beta})$ is hermitian. Then there exists
a change of frame on $S_{p,q}$ (change of position in Definition~\ref{changes}) given by
$$\widetilde Z_\alpha=U_\alpha^{~\beta}Z_\beta,
~\widetilde Y_\alpha=U_\alpha^{~\beta}Y_\beta,~\widetilde X_j=X_j,$$ for some unitary matrix $U$
such that $c_{\a}^{\b}$ is diagonalized and hence
the new contact forms $\phi_\alpha^{~\beta}$, $\alpha,\beta=1,\ldots,q$, satisfy
\begin{equation*}
\Phi_1^{~1}=\sum_{\alpha=1}^rc_\alpha \phi_\alpha^{~\alpha},
\quad 1\le r\le q,
\end{equation*}
where $c_\alpha$, $\alpha=1,\ldots,r$, are nonzero real valued smooth functions.
Then \eqref{seq} yields
\begin{equation}\Label{phi11}
\sum_{J} \Theta_{1}^{~J}\wedge \1{\Theta_{1}^{~J}}
= \sum_{\a,j} c_{\a} \theta_{\a}^{~j}\wedge \1{\theta_{\a}^{~j}} \mod \phi,
\end{equation}
which implies $c_{\a}> 0$
in view of the non-negativity mentioned above since the forms $\theta_{\a}^{~\g}\wedge \1{\theta_{\a}^{~\g}}$ are linearly independent.
 Hence after dilation of $\Phi_1^{~1}$, we may assume that $$c_1=1.$$

\begin{lemma}\Label{r}
Assuming $n'<2n$, we have  $r=1$ and
\begin{align}
\Phi_{1}^{~1}&=\phi_{1}^{~1},\Label{phi11-norm} \\
\Theta_1^{~J}&=\theta_{1}^{~J} \mod \phi.\Label{theta-norm}
\end{align}
\end{lemma}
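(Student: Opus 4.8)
The plan is to exploit the identity \eqref{phi11} together with Cartan's Lemma to force $r=1$, and then to read off \eqref{phi11-norm} and \eqref{theta-norm} from a dimension count that uses precisely the hypothesis $n'<2n$. First I would rewrite \eqref{phi11} in the form
$$\sum_{J}\Theta_{1}^{~J}\wedge\1{\Theta_{1}^{~J}}
-\sum_{\a=1}^{r}\sum_{j}c_{\a}\,\theta_{\a}^{~j}\wedge\1{\theta_{\a}^{~j}}=0\mod\phi,$$
and observe that modulo $\phi$ the forms $\theta_{\a}^{~j}$ (for $\a=1,\dots,q$, $j=1,\dots,n$) together with their conjugates form a basis of the space of $1$-forms on $S_{p,q}$ modulo $\phi$. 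Hence each $\Theta_{1}^{~J}$, being a $(1,0)$-form, is a linear combination $\Theta_{1}^{~J}=\sum_{\a,j}e_{\a j}^{~J}\theta_{\a}^{~j}\mod\phi$. Substituting this in and comparing coefficients of the independent $2$-forms $\theta_{\a}^{~j}\wedge\1{\theta_{\b}^{~k}}$ gives the matrix identity $E^{*}E=\mathrm{diag}(c_{1}I_{n},\dots,c_{r}I_{n},0,\dots,0)$, where $E$ is the $n'\times(nq)$ matrix with entries $e_{\a j}^{~J}$ (rows indexed by $J$, columns by $(\a,j)$), and the right-hand side is the $nq\times nq$ diagonal matrix having $c_{\a}$ repeated $n$ times in the $\a$-th block and $0$ in blocks $\a>r$. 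In particular the rank of $E$ equals $nr$, so $nr\le n'$.

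Now the hypothesis $n'<2n$ immediately yields $nr\le n'<2n$, hence $r<2$, i.e. $r=1$. With $r=1$ and the normalization $c_{1}=1$ already achieved by dilation, the equation $E^{*}E=\mathrm{diag}(I_{n},0,\dots,0)$ says that the columns $e_{1j}:=(e_{1j}^{~J})_{J}$ of $E$ form an orthonormal $n$-tuple in $\C^{n'}$ while all columns $e_{\a j}$ with $\a\ge2$ vanish. Thus $\Theta_{1}^{~J}=\sum_{j}e_{1j}^{~J}\theta_{1}^{~j}\mod\phi$ with the coefficient matrix $(e_{1j}^{~J})$ having orthonormal columns; in other words, after a rotation of the $X$-frame on $S_{p',q'}$ (type iv) in Definition~\ref{changes}), applied so as to send the span of the $\Theta_{1}^{~J}$ into the first $n$ coordinates, we get $\Theta_{1}^{~J}=\theta_{1}^{~J}\mod\phi$, which is \eqref{theta-norm}. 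Feeding $r=1$, $c_1=1$ back into the starting relation $\Phi_{1}^{~1}=\sum_{\a=1}^{r}c_{\a}\phi_{\a}^{~\a}$ gives $\Phi_{1}^{~1}=\phi_{1}^{~1}$, which is \eqref{phi11-norm}.

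The one point that needs care — and which I expect to be the main obstacle — is the bookkeeping in the coefficient comparison: one must check that the $2$-forms $\theta_{\a}^{~j}\wedge\1{\theta_{\b}^{~k}}$, $\theta_{\a}^{~j}\wedge\theta_{\b}^{~k}$, and their conjugates are genuinely linearly independent modulo $\phi$ (so that the identity really does decouple into the stated matrix equation), and that the ``mod $\phi$'' in \eqref{phi11} does not secretly allow extra terms of the form $\theta_{\a}^{~j}\wedge\phi_{\b}^{~\g}$ to interfere — this is exactly where Cartan's Lemma (applied to the collection $\theta_{\a}^{~j}$ and their conjugates, completed by the $\phi_{\b}^{~\g}$ to a full basis) is invoked to conclude that the purely $\theta$-part of the identity must vanish on its own. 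I would also spell out that the ``rotation'' used to straighten $\Theta_{1}^{~J}$ is a legitimate frame change on the target side that does not disturb the already-achieved normalization of $\Phi_{1}^{~1}$, since rotations fix $Z$ and $Y$ and hence fix $\Phi$. Everything else is routine linear algebra.
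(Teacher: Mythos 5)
Your proof is correct and follows essentially the same route as the paper: expand $\Theta_1^{~J}$ in the $\theta_\alpha^{~j}$ modulo $\phi$, extract the orthogonality relations for the coefficient vectors from \eqref{phi11}, use $n'<2n$ to force the blocks with $\alpha\ge 2$ to vanish, and straighten the remaining block by a unitary rotation of the target $X$-frame, which leaves $\Phi$ untouched. The only difference is cosmetic ordering: you obtain $r=1$ first from the rank bound $nr\le n'$, whereas the paper rotates first and then notes that for $\alpha\ne 1$ the $n$ mutually orthogonal vectors of equal length would have to lie in a space of dimension $n'-n<n$, forcing them to vanish.
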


\bpf
Let
\begin{equation}\Label{h-eq}
\Theta_1^{~J}=h^{J,\alpha}_{~j}\theta_\alpha^{~j}~\text{   mod   }~\phi.
\end{equation}
Then \eqref{phi11} implies
$$\sum_J h_{~j}^{J,\alpha}\1{h_{~k}^{J,\beta}}
=c_\alpha\delta_{\alpha\beta}\cdot\delta_{jk},$$
where $c_{\a}:=0$ for $\a>r$.
Thus the vectors $h^{\a}_{~j}:=(h_{~j}^{1,\a},\ldots,h_{~j}^{n',\a})$ are pairwise orthogonal and have length $c_{\a}$ independent of $j$.
Therefore after a suitable
rotation (see Definition~\ref{changes})
$$\widetilde \Theta_{a}^{~J}=\Theta_{a}^{~K}U_K^{~J},$$
where $(U_K^{~J})$ is unitary,
we may assume that $h^{1}_{~j}$, whose length is $c_{1}=1$, are precisely the first $n$ standard unit vectors in
$\C^{n'}$, i.e.\
\begin{equation}
h^{J,1}_{~j}=\delta_{Jj}.
\end{equation}
 Then for every fixed $\a\ne1$, we have $n$ orthogonal vectors $h^{\a}_{~j}$ in the span of the last $n'-n$ standard unit vectors.
Since $n'-n<n$ by our assumption, the latter is only possible when
$h^{\a}_{~j}=0$ for all $\a\ne1$. Thus we obtain
\begin{equation}
h^{J,\alpha}_{~j} = \delta_{Jj}\delta_{1\a}.
\end{equation}
Then \eqref{h-eq} implies \eqref{theta-norm}
 and  hence \eqref{phi11} implies $r=1$ and therefore
 \eqref{seq} implies \eqref{phi11-norm}.
\epf

\subsection{Determination of $\Phi_{2}^{~2}$ and $\Phi_{2}^{~1}$}
Consider the ideal $\theta_\alpha$ generated by
$\theta_\alpha^{~j}$ for $1\leq j\leq n$.
Let
\begin{equation}\Label{la}
\Phi_a^{~1}=\lambda_a\phi_1^{~1}~\text{mod}~
\{\phi_\alpha^{~\beta},  \a\ge 2 \text{ or } \b\ge2\}, \quad a\ge2,
\end{equation}
for some smooth functions $\lambda_a$, $a=2,\ldots,q'$.
Then \eqref{seq}  together with Lemma~\ref{r} imply
\begin{equation}\Label{thaj}
\Theta_a^{~j}\wedge \theta_j^{~1}=\lambda_a\theta_1^{~j}\wedge\theta_j^{~1}~
\mod \{\theta_\alpha,~ \1{\theta_{\alpha}},~\alpha\ge2\},~\phi, \quad a\ge2.
\end{equation}
Then there exists a change of position that leaves $\Theta_{1}^{~J}$ invariant and replaces $\Theta_a^{~J}$
with $\Theta_a^{~J}-\lambda_a\Theta_1^{~J}$, $a\ge2$, (see the discussion after Definition~\ref{changes}).
The same change of position leaves $\Phi_{1}^{~1}$ invariant
and transforms $\Phi_{a}^{~1}$ into $\Phi_{a}^{~1}-\l_{a}\Phi_{1}^{~1}$ for $a\ge2$.
After performing such change of position, \eqref{la} becomes
$$\Phi_a^{~1}=0~\text{mod}~
\{\phi_\alpha^{~\beta} : \a\ge 2 \text{ or } \b\ge2\}, \quad a\ge2,$$
and \eqref{thaj} becomes
\begin{equation}
\Theta_a^{~j}\wedge \theta_j^{~1}=0~ \mod \{\theta_\alpha,~ \1{\theta_{\alpha}} :\alpha\ge2\},~\phi, \quad a\ge2.
\end{equation}
Since $\Theta_a^{~j}$ are $(1,0)$ but $\theta_j^{~1}$ are $(0,1)$ and linearly independent,
it follows that
\begin{equation}\Label{theta-aj}
\Theta_a^{~j}=0 \mod  \{\theta_\a : \a\ge2\},\, \phi, \quad a\geq 2.
\end{equation}

Next for each $a\ge2$, let
\begin{equation}\Label{lab}
\Phi_a^{~a}=\lambda_{a,\beta}\phi_1^{~\beta}
\mod\{\phi_\alpha^{~\g} : \alpha\geq 2\}
\end{equation}
for some
functions $\lambda_{a,\beta}$. Suppose there
exists $a$ and $\beta$ such that $\lambda_{a,\beta}\neq 0$. We may assume $a=2$.
Using the identity
$$
d\Phi_2^{~2}=~\Theta_2^{~J}\wedge\Theta_J^{~2} \mod \Phi$$
together with \eqref{theta-aj} we obtain
\begin{equation}\Label{2a}
\sum_{J=n+1}^{n'} \Theta_2^{~J}\wedge \Theta^{~2}_J=\lambda_{2,\beta}~ \theta_1^{~j}\wedge\theta_j^{~\beta} \mod \{\theta_\a: \a\ge2\},\phi,
\end{equation}
where $\l_{2,\b}\ne0$.
On the left-hand side we have a linear combination of $n'-n$ $(1,0)$ forms,
whereas on the right-hand side we have a linear combination of at least $n$ linear independent $(1,0)$ forms with nonzero coefficients.
Since $n'-n<n$, this is impossible.
Hence we have $\l_{a,\b}=0$ for all $a,\b$ and therefore \eqref{lab} implies
$$\Phi_a^{~a}=0~\text{   mod
}\{\phi_\alpha^{~\beta} : \alpha\ge 2\}, \quad a\ge2.$$
Since $\Phi_{a}^{~b}$ and $\phi_{\a}^{~\b}$ are antihermitian, we also have
$$\Phi_a^{~a}=0~\text{   mod
}\{\phi_\alpha^{~\beta} : \b\ge 2\}, \quad a\ge2,$$
and hence
\begin{equation}\Label{phi-aa}
\Phi_a^{~a}=0~\text{   mod
}\{\phi_\alpha^{~\beta}: \a,\b\ge2 \}, \quad a\ge 2.
\end{equation}
Now \eqref{seq} implies
\begin{equation}\Label{aa}
\sum_{J=n+1}^{n'} \Theta_a^{~J}\wedge \1{\Theta_{a}^{~J}}=0~\text{   mod   }\{\theta_\a : \a\ge2\},\phi, \quad a\ge2,
\end{equation}
which implies
\begin{equation}\Label{aj}
\Theta^{~J}_a=0 ~\text{   mod   }\{\theta_\a : \a\ge2\},\phi,
\quad a\ge2,\, J>n.
\end{equation}
Together with \eqref{theta-aj} this yields
\begin{equation}\Label{theta-aJ}
\Theta_a^{~J}=0 \mod  \{\theta_\a : \a\ge2\},\, \phi, \quad a\geq 2.
\end{equation}

Now we redo our procedure for $\Phi_{a}^{~b}$. We can write
\begin{equation}\Label{phi-ab'}
\Phi_{a}^{~b}= \l_{a}^{~b}{}_{\b}^{~\a} \phi_{\a}^{~\b}
\end{equation}
for which \eqref{seq} yields
$$\Theta_a^{~J}\wedge \Theta_J^{~b}=
\l_{a}^{~b}{}_{\b}^{~\a} \theta_\a^{~j}\wedge\theta_j^{~\b}~
\text{mod}~\phi.$$
Then substituting \eqref{theta-aJ} we obtain
\begin{equation}
\l_{a}^{~b}{}_{\b}^{~\a} \theta_\a^{~j}\wedge\theta_j^{~\b} =0
\mod \{\theta_\g^{~k}\wedge\theta_{l}^{~\delta}  : \g,\delta\ge 2\},\phi, \quad a,b\ge2,
\end{equation}
which implies
\begin{equation}
\l_{a}^{~b}{}_{1}^{~\a} = \l_{a}^{~b}{}_{\b}^{~1} = 0, \quad a,b\ge 2.
\end{equation}
Hence \eqref{phi-ab'} yields
\begin{equation}
\Phi_{a}^{~b}= 0 \mod \{\phi_\alpha^{~\beta}: \a,\b\ge2 \}, \quad a,b\ge 2.
\end{equation}
Summarizing we obtain the following:
\begin{align}
\Phi_a^{~1}&=0 \mod
\{\phi_\alpha^{~\beta} : \a\ge 2 \text{ or } \b\ge2\}, \quad a\ge2, \\
\Phi_{a}^{~b}&= 0 \mod \{\phi_\alpha^{~\beta}: \a,\b\ge2 \}, \quad a,b\ge 2,\\
\Theta^{~J}_a &=0 \mod \{\theta_\a: \a\ge2\},\phi, \quad a\ge2.\Label{before-repeat}
\end{align}

Now repeat the argument from the beginning of this section and assume first that
$\Phi_{a}^{~a}=0$ for all $a\ge 2$.
We obtain
$$\Theta_{a}^{~J}=0 \mod \phi, \quad a\ge2,$$
and hence $df$ vanishes on the kernel of all $\theta_{1}^{~j}$ and $\phi_{\a}^{\b}$.
Since $f$ is an embedding, it follows that the latter kernel
equals the full complex tangent space, i.e.\ $q=1$.
In this case \eqref{aj} implies
$$\Theta^{~J}_a=0 \mod \phi, \quad a>1=q.$$
Then writing \eqref{phi-ab'}  and proceeding as before
we obtain $\l_{a}^{~b}{}_{\b}^{~\a}=0$ and hence
\begin{equation}
\Phi_{a}^{~b}=0, \quad q=1.
\end{equation}
(Note that we have assumed $q\ge 2$ excluding this case.
However, we shall repeat this procedure when
a similar case will occur.)

In the remaining case $q>1$, our assumption above cannot hold,
i.e.\ $\Phi_{a}^{~a}\ne 0$ for some $a$, say $a=2$.
Then \eqref{phi-aa} implies that, after a change of position as before,
we may assume that
$$\Phi_{2}^{~2}=\sum_{\a\ge 2} c_{\a} \phi_{\a}^{~\a}$$
for some $c_{\a}\ge0$ not all zero.
Then \eqref{seq} yields
\begin{equation}\label{two theta1}
\Theta_2^{~J}\wedge \Theta_J^{~2}=\sum_{\alpha\ge2} c_\alpha\theta_\alpha^{~j}\wedge\theta_j^{~\alpha}
~\text{   mod   }~\phi.
\end{equation}
Since the proof of Lemma~\ref{r} can be repeated for $\Phi_{2}^{~2}$
instead of $\Phi_{1}^{~1}$, we conclude that
the rank of the left-hand side of \eqref{two theta1} restricted
to the complex tangent space is $n$.
Therefore, in the right-hand side, only one $c_{\a}$, say $c_{2}$ can be different from zero. After a dilation (see Definition~\ref{changes}), we may assume
$$\Phi_{2}^{~2}= \phi_{2}^{~2}$$
and hence
\begin{equation}\Label{theta2}
\sum_{J}\T_{2}^{~J}\wedge \1{\T_{2}^{~J}} = \sum_{j}\theta_{2}^{~j}\wedge \1{\theta_{2}^{~j}}
\mod\phi.
\end{equation}

We claim that each $\T_{2}^{~J}$ is a linear combination of only $\theta_{2}^{~j}$ modulo $\phi$. Indeed, if $\T_{2}^{~J}$ were a combination of $\theta_{\a}^{~j}$ modulo $\phi$, where some of them
enters with a nonzero coefficient $\l_{\a}$ with $\a\ne 2$,
we would have $\theta_{\a}^{~j}\wedge \1{\theta_{\a}^{~j}}$
entering with positive coefficient
$\ge\l_{\a}\1{\l_{\a}}$ in the right-hand side of \eqref{theta2}, which is impossible, proving our claim.
As in the proof of Lemma~\ref{r} we now write
\begin{equation}\Label{teta2}
\Theta_2^{~J}=h^{J}_{~j}\theta_2^{~j}~\text{   mod   }~\phi.
\end{equation}
Since
\begin{equation}\Label{tet2}
\Phi_{2}^{~1}= \l_{\b}^{~\a} \phi_{\a}^{~\b}
\end{equation}
for suitable $\l_{\a}^{~\b}$, we obtain
\begin{equation}
\T_{2}^{~J}\wedge \T_{J}^{~1} = \l_{\b}^{~\a} \theta_{\a}^{~j} \wedge
\theta_{j}^{~\b} \mod \phi,
\end{equation}
which in view of \eqref{teta2} and Lemma~\ref{r}, yields
\begin{equation}\Label{h-id}
h^{k}_{~j}\theta_2^{~j} \wedge \theta_{k}^{~1} = \l_{\b}^{~\a} \theta_{\a}^{~j} \wedge
\theta_{j}^{~\b} \mod \phi.
\end{equation}
Since the right-hand side contains no terms
$\theta_{\a}^{~j} \wedge
\theta_{k}^{~\b}$ with $j\ne k$, it follows that
$h^{k}_{~j}=0$ for $j\ne k$ and hence $h^{j}_{~j}=\l_{2}^{~1}=:\l$ for all $j$
and $\l_{\b}^{~\a}=0$ for $(\a,\b)\ne(1,2)$.
Then \eqref{teta2} implies
\begin{equation}\Label{teta3}
\Theta_2^{~j}=\l \theta_2^{~j}~\text{   mod   }~\phi.
\end{equation}
Finally, substituting \eqref{teta3} into \eqref{theta2} and identifying coefficients we obtain
$$ \l\bar\l \delta_{ij} + \sum_{J>n} h_{~i}^{J}\1{h_{~j}^{J}}
=\delta_{ij}.$$
In particular, it follows that the vectors $h_{i}:=(h^{n+1}_{~i},\ldots,h^{n'}_{~i})$ are orthogonal and of the same length.
But since we have assumed $n'-n<n$, we must have $h_{i}=0$
and therefore $|\l|=1$. Now we perform a change of position
as in Definition~\ref{changes} with $W_{\a}^{~\b}:=c_{\a}\delta_{\a\b}$ with
$c_{\a}=1$ for $\a\ne2$ and $c_{2}=\l$.
Then we arrive at the following relations:
\begin{equation}
\Phi_{a}^{~a}=\phi_{a}^{~a}, \quad a=1,2,
\end{equation}
\begin{equation}
 \T_{a}^{~J}=\theta_{a}^{~J} \mod \phi, \quad a=1,2.
\end{equation}

Since after the last change of position, we have $\l=1$ in \eqref{teta3},
we obtain from \eqref{h-id} that $\l_{\b}^{~\a}=0$ unless $\a=2$ and $\b=1$,
in which case $\l_{2}^{~1}=1$. Then substituting into \eqref{tet2} yields
\begin{equation}
\Phi_{2}^{~1}=\phi_{2}^{~1}.
\end{equation}

\subsection{Determination of $\Phi_{a}^{~b}$}
Now we repeat again the arguments
after the proof of Lemma~\ref{r},
where we replace $1$ by $2$ and $2$ by $3$, to arrive at the identities:
\begin{align}
\Phi_a^{~2}&=0 \mod
\{\phi_\alpha^{~\beta} : \a\ge 3 \text{ or } \b\ge3\}, \quad a\ge3, \\
\Phi_{a}^{~b}&= 0 \mod \{\phi_\alpha^{~\beta}: \a,\b\ge3 \}, \quad a,b\ge 3,\\
\Theta^{~J}_a &=0 ~\text{   mod   } \{\theta_\a : \a\ge 3\},\,\phi, \quad a\ge3.
\end{align}
Then continuing following the arguments after \eqref{before-repeat}
with the same replacements, we obtain
\begin{equation}
\Phi_{3}^{~\a}=\phi_{3}^{~\a},\quad \a=1,2,3,
\end{equation}
\begin{equation}
 \T_{3}^{~J}=\theta_{3}^{~J}\mod \phi.
\end{equation}

Finally, arguing by induction on $b=4,\ldots,q'$, and proceeding
by repeating the same arguments with
$1$ replaced by $b$ and $2$ by $b+1$,
we obtain the following lemma.

\begin{lemma}\Label{eds}
For any local CR embedding $f$ from $S_{p,q}$ into $S_{p',q'}$,
there is a choice of sections of the bundles
$\6B_{p,q}\to S_{p,q}$ and $\6B_{p',q'}\to S_{p',q'}$
such that the pulled back forms satisfy
\begin{align*}
\Phi_a^{~b}-\phi_a^{~b}&=0,\\
\Theta_a^{~J}-\theta_a^{~J}&=0 \mod\phi.
\end{align*}
\end{lemma}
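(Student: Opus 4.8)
The plan is to prove the lemma by induction on the column index $b$, using Lemma~\ref{r} as the base case $b=1$ and the explicit computations carried out above for $b=2$ and $b=3$ as the blueprint for the general inductive step; throughout I keep the standing assumption $n'<2n$ of this section (equivalently \eqref{main-ineq}).

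So suppose that after a finite sequence of changes of frame of the types listed in Definition~\ref{changes} (and the auxiliary change discussed after it) in both $\6B_{p,q}$ and $\6B_{p',q'}$ we have already arranged
$$\Phi_a^{~c}=\phi_a^{~c},\qquad \Theta_a^{~J}=\theta_a^{~J}\mod\phi\qquad\text{for all }a,c\le b-1,$$
and I want to extend this to all $a,c\le b$. The first task is the \emph{clearing} step: one writes $\Phi_a^{~b-1}$ and $\Phi_a^{~a}$ for $a\ge b$ in terms of the $\phi_\alpha^{~\beta}$ modulo the sub-ideal $\{\phi_\alpha^{~\beta}:\alpha\ge b\text{ or }\beta\ge b\}$ (respectively $\{\phi_\alpha^{~\beta}:\alpha,\beta\ge b\}$), plugs these into the structure equations \eqref{seq} together with the inductive normalization, and reads off Levi-type identities $\Theta_a^{~j}\wedge\theta_j^{~b-1}=\lambda_a\,\theta_{b-1}^{~j}\wedge\theta_j^{~b-1}\mod(\cdots)$ and their analogues. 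Cartan's Lemma and the distinction between $(1,0)$ and $(0,1)$ forms show that the coefficients $\lambda_a$ are the only obstruction to vanishing; a change of position chosen so as to leave the already-normalized forms with indices $<b$ untouched (as in the discussion after Definition~\ref{changes}) replaces $\Theta_a^{~J}$ by $\Theta_a^{~J}-\lambda_a\Theta_{b-1}^{~J}$ and correspondingly modifies $\Phi_a^{~b-1}$, killing the $\lambda_a$'s. Iterating this within the block $\{1,\dots,b\}$ — and using at each step, where a family of pairwise orthogonal $(1,0)$-forms lying in an $(n'-n)$-dimensional span is forced to vanish, the \emph{strict} inequality $n'-n<n$ — one arrives at $\Phi_a^{~b-1}=0$ and $\Phi_a^{~b}=0$ modulo the appropriate sub-ideals and $\Theta_a^{~J}=0\mod\{\theta_\alpha:\alpha\ge b\},\phi$ for all $a\ge b$.

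With the cross terms cleared, the second task is to rerun the proof of Lemma~\ref{r} \emph{relative to} the sub-ideal $\{\phi_\alpha^{~\beta}:\alpha,\beta\ge b\}$. If every diagonal $\Phi_a^{~a}$ with $a\ge b$ vanishes modulo it, then \eqref{seq} forces $\Theta_a^{~J}=0\mod\phi$ for all $a\ge b$, so that $df$ annihilates the common kernel of $\phi$ and of $\theta_\alpha^{~j}$ for $\alpha\le b-1$; since $f$ is an embedding this is only possible when $q=b-1$, and in that borderline case the remaining forms are disposed of exactly as in the $q=1$ discussion following \eqref{before-repeat}, which finishes the lemma. Otherwise some $\Phi_a^{~a}$ with $a\ge b$ is nonzero; after a change of position diagonalizing its Hermitian coefficient matrix and a relabelling we may take it to be $\Phi_b^{~b}=\sum_{\alpha\ge b}c_\alpha\phi_\alpha^{~\alpha}$ with $c_\alpha\ge0$ not all zero, and the Levi identity for $\Phi_b^{~b}$ from \eqref{seq}, combined with the rank count and $n'-n<n$, forces a single $c_\alpha$ to survive; a dilation normalizes it so that $\Phi_b^{~b}=\phi_b^{~b}$. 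Then writing $\Theta_b^{~J}=h^{J}_{~j}\theta_b^{~j}\mod\phi$ and matching it against $\Phi_b^{~b-1}$ through \eqref{seq} as in \eqref{h-id} gives $h^{J}_{~j}=\lambda\,\delta_{Jj}$ for $J\le n$ and kills the coefficients of $\Phi_b^{~b-1}$ except one; substituting back into the Levi identity for $\Phi_b^{~b}$ and invoking $n'-n<n$ once more forces the remaining components $h^{J}_{~j}$ with $J>n$ to vanish and $|\lambda|=1$, whereupon a final dilation with $c_b=\lambda$ yields $\Phi_b^{~b}=\phi_b^{~b}$, $\Phi_b^{~b-1}=\phi_b^{~b-1}$, $\Theta_b^{~J}=\theta_b^{~J}\mod\phi$, and, repeating within the block $\{1,\dots,b\}$, $\Phi_b^{~c}=\phi_b^{~c}$ for all $c\le b$. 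The antihermitian symmetry \eqref{symmetries} then gives $\Phi_c^{~b}=\phi_c^{~b}$ as well, closing the induction; taking $b=q'$ yields the lemma.

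The step I expect to be the genuine obstacle is not any single computation but the consistency of this long chain of frame adjustments. Each inductive step uses several distinct changes of frame in succession — a change of position to remove the $\lambda_a$'s, a unitary diagonalization of a Hermitian matrix, one or two dilations, and a rotation of the $X$-frame on the target — and one must verify at every stage that the chosen adjustment does not disturb the normalizations $\Phi_a^{~c}=\phi_a^{~c}$, $\Theta_a^{~J}=\theta_a^{~J}\mod\phi$ already obtained for the smaller indices. This is exactly the bookkeeping that was done by hand in passing from $b=1$ to $b=2$ above (where, for instance, the change of position removing the $\lambda_a$'s was checked to leave $\Phi_1^{~1}$ and $\Theta_1^{~J}$ fixed), and the content of the inductive proof is precisely that this bookkeeping continues to go through for all $b\le q'$.
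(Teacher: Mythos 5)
Your proposal follows essentially the same route as the paper: Lemma~\ref{r} serves as the base case and the normalization is propagated column by column exactly as in the paper's own treatment of $\Phi_2^{~2}$, $\Phi_2^{~1}$ and the subsequent induction on $b$, with the same clearing step via changes of position, the same uses of the bound $n'-n<n$, and the same case distinction (all remaining diagonal terms vanishing forces $q=b-1$). The only slip is terminological: the final adjustment absorbing the unimodular factor $\lambda$ is a change of position as in Definition~\ref{changes}, not a dilation, since dilations only allow positive real factors.
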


\begin{remark}\Label{back1}
A change of section of $\6B_{p,q}\to S_{p,q}$
(corresponding to a change of frame on $S_{p,q}$)
has been used in course of the proof.
However, once Lemma~\ref{eds} has been established,
one can change the frame on $S_{p,q}$ back to the original one
together with the corresponding change of the frame on $S_{p',q'}$
involving only the subframe $(Z_a, X_J, Y_b)$ with $a,b\le q, J\le n$,
such that the conclusion of the lemma remains valid.
\end{remark}

\section{Determination of $\Theta_{a}^{~J}$.}
In our next analysis we shall use
the full structure equations for $\phi$ and $\Phi$
which in view of Lemma~\ref{eds} take the form
\begin{align}
d\phi_{a}^{~b} &=
\psi_a^{~\g}\wedge\phi_\g^{~b}  +
\theta_a^{~j}\wedge\theta_j^{~b}+
\phi_{a}^{~\g}\wedge \3\psi_{\g}^{~b},\Label{struc20}\\
d\phi_{a}^{~b} &=
\Psi_a^{~\g}\wedge\phi_\g^{~b}  +
\Theta_a^{~J}\wedge\Theta_J^{~b}+
\phi_{a}^{~\g}\wedge \3\Psi_{\g}^{~b},\Label{struc2}
\end{align}
and    
their difference
\begin{equation}\Label{struc-phi}
(\Psi_a^{~\g} - \psi_{a}^{~\g})\wedge\phi_\g^{~b}  +
\Theta_a^{~J}\wedge\Theta_J^{~b}
-\theta_a^{~j}\wedge\theta_j^{~b}+
\phi_{a}^{~\g}\wedge(\3\Psi_{\g}^{~b} - \3\psi_{\g}^{~b})=0,
\end{equation}
as well as the structure equations for $\theta$ and $\Theta$:
\begin{align}
d\theta_{a}^{~J} &=
\psi_a^{~\b}\wedge \theta_{\b}^{~J}
+\theta_a^{~k}\wedge \omega_k^{~J}
+\phi_{a}^{~\b} \wedge \sigma_{\b}^{~J},
\Label{dtheta0-1}\\
d\Theta_{a}^{~J} &=
\Psi_a^{~b}\wedge \Theta_b^{~J}
+\Theta_a^{~K}\wedge \Omega_K^{~J}
+\phi_{a}^{~\b} \wedge \Sigma_{\b}^{~J}.
\Label{dtheta0}
\end{align}


Our next goal is to determine $\Theta_{a}^{~J}$.
It will be determined together with
components $\Psi_{a}^{~b}$
and $\Omega_{K}^{~J}$ modulo $\phi$.
In view of Lemma~\ref{eds} we can write
\begin{align}
\Theta_a^{~J}-\theta_a^{~J}=
\eta_{a}^{~J}{}_{\b}^{~\g}   \phi_\g^{~\b},\Label{eq1}
\end{align}
for some $\eta_{a}^{~J}{}_{\b}^{~\g}$, and using the symmetry relations
\eqref{symmetries},
\begin{align}
\Theta_J^{~a}-\theta_J^{~a}=
\eta_{J}^{~a}{}_{\g}^{~\b}   \phi_\b^{~\g},\Label{eq1'}
\end{align}
where
\begin{equation}
\eta_{J}^{~a}{}_{\g}^{~\b}:= \1{\eta_{a}^{~J}{}_{\b}^{~\g}}.
\end{equation}

We will show that after a frame change, we may assume that
$$\eta_{a}^{~J}{}_{\b}^{~\g}=0.$$
Using \eqref{eq1} we compute
\begin{equation}
\Theta_a^{~J}\wedge\Theta_J^{~b}-\theta_a^{~j}\wedge\theta_j^{~b}
= \eta_{a}^{~j}{}_{\b}^{~\g}   \phi_\g^{~\b}\wedge \theta_{j}^{~b}
-\theta_{a}^{~j} \wedge \eta_{j}^{~b}{}_{\g}^{~\b}   \phi_\b^{~\g}
\mod \phi\wedge\phi
\end{equation}
and \eqref{struc-phi} becomes
\begin{equation}\Label{phi-dif}
(\Psi_a^{~\g} - \psi_{a}^{~\g})\wedge\phi_\g^{~b}
+\phi_{a}^{~\g}\wedge(\3\Psi_{\g}^{~b} - \3\psi_{\g}^{~b})
+\eta_{a}^{~j}{}_{\b}^{~\g}   \phi_\g^{~\b}\wedge \theta_{j}^{~b}
-\theta_{a}^{~j} \wedge \eta_{j}^{~b}{}_{\g}^{~\b}   \phi_\b^{~\g}
=0,
\mod \phi\wedge\phi
\end{equation}
where $\phi\wedge\phi$ stands for the space generated by all possible wedge products
$\phi_{\b}^{~\g}\wedge\phi_{\delta}^{~\tau}$.
Together with \eqref{phi-dif}
we shall consider the structure equations
obtained by differentiating \eqref{eq1} and using \eqref{seq},
\eqref{dtheta0-1} and \eqref{dtheta0}:
\begin{align}
\eta_{a}^{~J}{}_{\b}^{~\g}
\theta_\gamma^{~k}\wedge\theta_k^{~\beta}  =
(\Psi_a^{~\b} - \psi_{a}^{~\b}) \wedge \theta_\b^{~J}
+\theta_a^{~k}\wedge (\Omega_k^{~J} -\omega_{k}^{~J})
 \mod \phi. \Label{dtheta}
\end{align}

\subsection{Determination of $\Theta_{a}^{~J}$ for $a>q$, $J>n$.}
In case $a>q$ and $J>n$ the right-hand side of \eqref{dtheta} is zero.
Since the forms $\theta_{\g}^{~k}$ and $\theta_{k}^{~\b}$
are $(1,0)$ and $(0,1)$ respectively and are linearly independent,
 we conclude
\begin{equation*}
\eta_{a}^{~J}{}_{\b}^{~\g}=0, \quad a>q, \ J>n,
\end{equation*}
and hence \eqref{eq1} yields
\begin{equation}\Label{theta4}
\Theta_a^{~J}=0,  \quad a>q, \ J>n.
\end{equation}

\subsection{Determination of $\Theta_{a}^{~j}$ for $a>q$.}
For $a>q$, $b=\b\le q$, \eqref{phi-dif} takes the form
\begin{equation}\Label{phi-psi}
0=\Psi_a^{~\a}\wedge\phi_\a^{~\b}
+\eta_{a}^{~j}{}_{\delta}^{~\a}
\phi_\a^{~\delta}
\wedge \theta_j^{~\b}
~\text{   mod   }~\phi\wedge\phi, \quad a>q.
\end{equation}
Since the forms $\Re(\phi_\beta^{~\alpha})$, $\a<\b$, and $\Im (\phi_\beta^{~\alpha}),~\a\le\b$, are linearly independent over $\R$, it follows that $\phi_{\a}^{~\b}$ are linearly independent over $\C$.
Then we can collect the coefficients in front of these forms and  apply complex Cartan's Lemma for  a fixed $\b$ to obtain
$$\Psi_a^{~\a}\in {\rm span}\{ \theta_j^{~\b},\,\phi\}, \quad a>q.$$
 But $\Psi_a^{~\a}$ is independent of the choice of $\b$.
Therefore, since $\theta_{j}^{~\b}$ are linearly independent
and we have assumed $q\ge 2$, we conclude that
\begin{equation}\Label{psi-sigma}
\Psi_a^{~\a}=0~\text{   mod   }~\phi, \quad a>q,
\end{equation}
and hence \eqref{phi-psi} implies
$$\eta_{a}^{~k}{}_{\a}^{~\g}\phi_\gamma^{~\a}=0, \quad a>q,$$
which in view of \eqref{eq1} yields
\begin{equation}\Label{theta3}
\Theta_a^{~j}=0, \quad a>q.
\end{equation}

\subsection{Reducing the freedom for $\Theta_{\a}^{~J}$ for $J>n$}
Next we use \eqref{dtheta} in case $a=\a\le q$ and $J>n$  that becomes
\begin{equation}\Label{nonum}
\theta_\alpha^{~k}\wedge\Omega_k^{~J}=\eta_{\a}^{~J}{}_{\b}^{~\g}
\theta_\gamma^{~k}\wedge\theta_k^{~\beta} \mod \phi, \quad J>n.
\end{equation}
Since $\theta_{\a}^{~k}$ are linearly independent and $(1,0)$,
whereas $\theta_k^{~\beta}$ are $(0,1)$,
the term with $\theta_\gamma^{~k}\wedge\theta_k^{~\beta}$, $\g\ne\a$, in the right-hand side cannot occur in the left-hand side. Therefore
$$
\eta_{\alpha~\beta}^{~J~\gamma}=0~\text{   if   }~\g\ne\a, \quad J>n,$$
and hence \eqref{eq1} becomes
\begin{equation}\Label{eq5}
\Theta_\alpha^{~J}=\eta_{\alpha~\beta}^{~J}\phi_\alpha^{~\beta}, \quad J>n,
\end{equation}
where
$$\eta_{\alpha~\beta}^{~J}:=\eta_{\alpha~\beta}^{~J~\a},$$
and  \eqref{nonum} becomes
\begin{equation}\Label{nonum1}
\theta_\alpha^{~k}\wedge\Omega_k^{~J}=\eta_{\a~\b}^{~J}
\theta_\a^{~k}\wedge\theta_k^{~\beta} \mod \phi, \quad J>n,
\end{equation}
i.e.\
\begin{equation}\Label{nonum2}
\theta_\alpha^{~k}\wedge (
\Omega_k^{~J}-\eta_{\a~\b}^{~J}\theta_k^{~\beta}
) =0\mod \phi, \quad J>n.
\end{equation}
Then  using linear independence of $\theta_{\a}^{~k}$ and applying Cartan's Lemma, we obtain
\begin{equation}\Label{omegak}
\Omega_k^{~J}=\eta_{\alpha~\beta}^{~J}\theta_k^{~\beta}\mod
\{\phi, ~\theta_\alpha\}, \quad J>n.
\end{equation}
Since $\Omega_k^{~J}$ is independent of $\alpha$ and $q\ge2$ by our assumption, taking $(0,1)$ parts we obtain
\begin{equation}\Label{eta-J}
\eta_{\alpha~\beta}^{~J}=\eta_{~\b}^{J},
\quad J>n,
\end{equation}
for some $\eta_{~\b}^{J}$,
hence \eqref{eq5} implies
\begin{equation}
\Theta_\alpha^{~J}=\eta_{~\beta}^{J}\phi_\alpha^{~\beta}, \quad J>n,
\end{equation}
and \eqref{omegak} yields
\begin{equation}\Label{omega-free}
\Omega_k^{~J}=\eta_{~\beta}^{J}\theta_k^{~\beta}~\text{   mod   }~\phi, \quad J>n,
\end{equation}
where we dropped $\theta_{\a}$ on the right-hand side, 
since now both sides are independent of $\a$ and since $q\ge 2$ by our assumption.

\subsection{Reducing the freedom for $\Theta_{\a}^{~j}$}
Here we use the structure equation \eqref{phi-dif} in case $a=\a\le q$ and $b=\b\le q$.
Then the forms $\phi_\alpha^{~\gamma}$, $\g\ne\b$,
and $\phi_\delta^{~\beta}$, $\delta\ne\a$, together with $\phi_{\a}^{~\b}$  are linearly independent. Therefore identifying the coefficients in front of these forms
in \eqref{phi-dif}
and using Cartan's Lemma yields
\begin{align}
\Psi_\alpha^{~\gamma}-\psi_\alpha^{~\gamma}&=0
\mod \{\theta_\a,\1{\theta_\b},\,\phi\}~\text{   if   }~ ~\gamma\neq\alpha,\\
\left(\Psi_\alpha^{~\alpha}-\psi_\alpha^{~\alpha}\right)
-(\3\Psi_{\b}^{~\b}-\3\psi_{\b}^{~\b})
&=0\mod \{\theta_\a,\1{\theta_\b},\,\phi\}.\Label{phi-ab}
\end{align}
Since $\left(\Psi_\alpha^{~\gamma}-\psi_\alpha^{~\gamma}\right),~\gamma\neq\alpha,$ is independent of the choice of $\beta$ and $q\ge2$, we conclude
\begin{equation}\Label{psi-psi}
\Psi_\alpha^{~\gamma}-\psi_\alpha^{~\gamma} =0
\mod \{\theta_\alpha,\phi\} ~\text{   if   }~\gamma\neq\alpha.
\end{equation}

Substituting now \eqref{psi-psi} into \eqref{dtheta} for $a=\a\le q$, 
$J=j\le n$, 
 and identifying coefficients in front of $\theta_{\g}^{~k}\wedge\theta_{k}^{~\b}$
 for $\g\ne\a$, since $\theta_{\g}^{~j}$ are $(1,0)$ and linearly independent, whereas $\theta_{j}^{~\a}= -\1{\theta_{\a}^{~j}}$ are $(0,1)$,
we obtain
$$\eta_{\alpha~\beta}^{~j~\gamma}=0~\text{   if   }\gamma\neq\alpha.$$
In view of
\eqref{psi-psi} we can write
\begin{equation}\Label{psi-alpha}
\Psi_\alpha^{~\g}-\psi_\alpha^{~\g}=h_{\alpha~k}^{~\g}\theta_\alpha^{~k}~\text{   mod   }~\phi, \quad \gamma\neq\alpha,
\end{equation}
for suitable $h_{\alpha~k}^{~\gamma}$ and put
\begin{equation}\Label{h-aa}
h_{\alpha~k}^{~\a}:=0.
\end{equation}
Then \eqref{dtheta} for $a=\a\le q$ and $J=j\le n$ becomes
$$\left\{(\Psi_\alpha^{~\alpha}-\psi_\alpha^{~\alpha})\delta_k^{~j}-
(\Omega_k^{~j}-\omega_k^{~j})\right\}\wedge\theta_\alpha^{~k}
+\sum_{\g\neq\alpha}(\Psi_\alpha^{~\g}-\psi_\alpha^{~\g})\wedge\theta_\g^{~j}
=\eta_{\alpha~\g}^{~j}\theta_\alpha^{~k}\wedge\theta_k^{~\g}
~\text{   mod   }~\phi,$$
where
$$\eta_{\alpha~\beta}^{~j}:=\eta_{\alpha~\beta}^{~j~\alpha}.$$
Substituting \eqref{psi-alpha}  yields
$$\big\{(\Psi_\alpha^{~\alpha}-\psi_\alpha^{~\alpha})\delta_k^{~j}-
(\Omega_k^{~j}-\omega_k^{~j} 
+ h_{\alpha~k}^{~\gamma}\theta_\gamma^{~j}
-\eta_{\alpha~\g}^{~j}\theta_k^{~\g}
)\big\}\wedge\theta_\alpha^{~k}
=0
~\text{   mod   }~\phi.$$
Now using Cartan's Lemma we obtain
\begin{equation}\Label{psi-omega}
(\Psi_\alpha^{~\alpha}-\psi_\alpha^{~\alpha})\delta_k^{~j}-(\Omega_k^{~j}-\omega_k^{~j})
=-h_{\alpha~k}^{~\gamma}\theta_\gamma^{~j}
+\eta_{\alpha~\gamma}^{~j}\theta_k^{~\gamma}
 \mod  \{\phi,\theta_\alpha\}.
\end{equation}
As consequence of \eqref{phi-ab} and \eqref{symmetries} we also have
\begin{equation}\Label{psi-ab}
\left(\Psi_\alpha^{~\alpha}-\psi_\alpha^{~\alpha}\right)
+\left(\Psi_{\bar\b}^{~\bar\b}-\psi_{\bar\b}^{~\bar\b}\right)
=0\mod \{\theta_{\a},\1{\theta_{\b}},\phi\},
\end{equation}
and, in particular,
\begin{equation}\Label{psi-aa}
\Re\left(\Psi_\alpha^{~\alpha}-\psi_\alpha^{~\alpha}\right)
=0\mod \{\theta_{\a},\1{\theta_{\a}},\phi\}.
\end{equation}
Furthermore, since $\Omega_j^{~k}$ and $\omega_j^{~k}$ are antihermitian
in view of \eqref{symmetries},
taking hermitian part with respect to $(j,k)$ of \eqref{psi-omega},
using \eqref{psi-ab} 
and identifying coefficients in front of $\theta_{k}^{~\g}=-\1{\theta_{\g}^{~k}}$, we obtain
\begin{equation}\Label{h-eta}
\overline{ h_{\alpha~j}^{~\gamma}}+\eta_{\alpha~\gamma}^{~j}=0,~\text{   if   }~\gamma\neq\alpha.
\end{equation}
Next, since $\Omega_j^{~k}$ and $\omega_j^{~k}$ are antihermitian,
 \eqref{psi-ab} implies
$$
(\Psi_\alpha^{~\alpha}-\psi_\alpha^{~\alpha})-(\Omega_j^{~j}-\omega_j^{~j})=
-(\Psi_{\bar\beta}^{~\bar\beta}-\psi_{\bar\beta}^{~\bar\beta})+(\Omega_{\bar j}^{~\bar j}-\omega_{\bar j}^{~\bar j})
~\text{   mod   }~ \{\phi,\theta_\alpha, \1{\theta_{\beta}}\}.
$$
Hence using \eqref{psi-omega} for $k=j$ and adding its conjugate with $\a$ replaced by $\b$, using \eqref{h-eta} and identifying
the coefficients in front of $\theta_{j}^{~\g}$
we obtain
$$\eta_{\alpha~\gamma}^{~j}=\eta_{\beta~\gamma}^{~j}, \quad \g\ne\b,$$
and hence
$$\eta_{\a}^{~j}{}_{\g}=\eta^{j}_{~\g}$$
for suitable $\eta^{j}_{~\g}$.
Hence \eqref{eq1} implies
\begin{equation}\Label{theta-eta}
\Theta_\alpha^{~j}-\theta_\alpha^{~j}=\eta_{~\beta}^{j}\phi_\alpha^{~\beta}.
\end{equation}

\subsection{Determination of $\Theta_{a}^{~J}$ by a change of frame}
Let $(B_a^{~J})$ be  a matrix defined by
$$B_\alpha^{~J}:=\eta_{~\alpha}^{J}, \quad B_{a}^{~J}:=0, \quad a>q,
$$
where $\eta_{~\alpha}^{J}$ is defined by \eqref{eta-J} for $J>n$.
Consider the change of frame of $S_{p',q'}$
discussed after Definition~\ref{changes},
given by
\begin{equation*}
\widetilde Z_a=Z_a,\quad
\widetilde X_J=X_{J}  + C_J^{~b}Z_b,\quad
\widetilde Y_a=Y_a+A_a^{~b}Z_b+B_a^{~J}X_J
\end{equation*}
such that
$$C_J^{~a}:=-B_J^{~a}$$
and $A_{a}^{~b}$ satisfies
$$\left(A_a^{~b}+\overline{A_b^{~a}}\right)+\sum_{J}B_a^{~J} \1{B_{b}^{~J}}=0.$$
Since the sum here is hermitian, one can always choose $A_{a}^{~b}$ with this property.
Then $\Phi_{a}^{~b}=\phi_{a}^{b}$ remain the same while $\Theta_{a}^{J}$ change to
$$\Theta_a^{~J}-\phi_a^{~b}B_b^{~J}.$$
Therefore \eqref{theta-eta} becomes
\begin{equation}\Label{theaj}
\Theta_a^{~J}=\theta_a^{~J},
\end{equation}
which is equivalent to $\eta^{J}_{~\b}=0$ and hence
$\eta_{\a}^{~j}{}_{\g}=0$ and $h_{\alpha~j}^{~\gamma}=0$ in view of
\eqref{h-aa} and
 \eqref{h-eta}.
Therefore \eqref{psi-omega} implies
\begin{equation}\Label{unname}
(\Psi_\alpha^{~\alpha}-\psi_\alpha^{~\alpha})\delta_{k}^{~j}-(\Omega_k^{~j}-\omega_k^{~j})=0 \mod   \{\phi,\theta_\alpha\},
\end{equation}
and \eqref{psi-alpha} together with \eqref{psi-sigma} implies
\begin{equation}\Label{psi-ag}
\Psi_a^{~\gamma}-\psi_a^{~\gamma}=0~\text{   mod   }~\phi,~\text{   if   }~\gamma\neq a,
\end{equation}
and, since the left-hand side of \eqref{unname} is independent of $\a$ for $j\ne k$,
together with \eqref{omega-free} we obtain
\begin{equation}\Label{omega-modphi}
\Omega_k^{~J}-\omega_k^{~J}=0~\text{   mod   }~\phi,~\text{   if   }~J\neq k.
\end{equation}

\section{Determination of $\Psi_{a}^{~\b}$ and $\Omega_{k}^{~J}$}
Next, we use \eqref{theaj} in the structure equations   for  $d\theta_{a}^{~J}=d\Theta_{a}^{~J}$, which yield
\begin{equation}\Label{struct-cor}
(\Psi_{a}^{~\b} - \psi_{a}^{~\b})\wedge \theta_{\b}^{~J}
+\theta_{a}^{~k}\wedge (\Omega_{k}^{~J} - \omega_{k}^{~J})
+ \phi_{a}^{~\b}\wedge (\Sigma_{\b}^{~J} - \sigma_{\b}^{~J})
= 0.
\end{equation}

\subsection{Determination of $\Psi_{a}^{~\b}$ for $a>q$}
In case $a>q$, $J=j\le n$, \eqref{struct-cor} takes the form
\begin{equation}\Label{psi-a}
\Psi_a^{~\beta}\wedge\theta_\beta^{~j}=0, \quad a>q.
\end{equation}
Together with \eqref{psi-ag}, this yields
\begin{equation}\Label{aq}
\Psi_a^{~\beta}=0, \quad a>q.
\end{equation}

\subsection{Determination of $\Omega_{k}^{~J}$ and $\Sigma_{\b}^{~J}$ for $J>n$}
Next use \eqref{struct-cor} for $a=\a\le q$ and $J>n$ to obtain
$$\theta_\alpha^{~k}\wedge\Omega_k^{~J}
+ \phi_{\a}^{~\b} \wedge \Sigma_{\b}^{~J}=0, \quad J>n.
$$
By Cartan's Lemma,
\begin{equation}
\Omega_{k}^{~J} = 0 \mod \{\theta_{\a}, \phi_{\a}\}, \quad
\Sigma_{\b}^{~J} = 0 \mod \{\theta_{\a}, \phi_{\a}\}, \quad
J>n,
\end{equation}
where $\phi_{\a}$ is the ideal generated by $\phi_{\a}^{~\b}$ for $\a$ fixed.
Since $\Omega_k^{~J}$ and $\Sigma_{\b}^{~J}$ are independent of $\alpha$ and $q\ge2$, we conclude
\begin{equation}
\Omega_k^{~J}=\Sigma_{\b}^{~J}=0, \quad J>n.
\end{equation}

\medskip
We summarize the obtained alignment of the connection forms:

\begin{proposition}\Label{eds-2}
For any local CR embedding $f$ from $S_{p,q}$ into $S_{p',q'}$,
there is a choice of sections of the frame bundles
$\6B_{p,q}\to S_{p,q}$ and $\6B_{p',q'}\to S_{p',q'}$
such that
\begin{align}
\Phi_a^{~b}&=\phi_a^{~b}, \quad
\Theta_a^{~J}=\theta_a^{~J}, \\
\Psi_a^{~\beta}&=0,\quad
\Omega_j^{~K}=0,\quad
\Sigma_{\a}^{~K}=
0, \quad a>q, \quad K>n.
\end{align}
\end{proposition}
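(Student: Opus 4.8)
The proposition collects the normalizations established in the preceding sections, so the plan is chiefly to check that they hold simultaneously for one choice of sections; for completeness I also recall how \eqref{aq} and the vanishing of $\Omega_j^{~K}$, $\Sigma_\alpha^{~K}$ for $K>n$ are squeezed out of the structure equations.

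First I would fix the sections obtained in \S5: by Lemma~\ref{eds} together with the frame change producing \eqref{theaj}, there are sections of $\6B_{p,q}\to S_{p,q}$ and $\6B_{p',q'}\to S_{p',q'}$ with $\Phi_a^{~b}=\phi_a^{~b}$ and $\Theta_a^{~J}=\theta_a^{~J}$ for all indices (out-of-range forms being zero by the standing convention). Since no further change of frame is made after \eqref{theaj}, every subsequent identity is valid for this same pair of sections; that is the only bookkeeping point, and it is clear from the organization. With the sections fixed one has $d\Theta_a^{~J}=d\theta_a^{~J}$, so subtracting the structure equations \eqref{dtheta0} and \eqref{dtheta0-1} (using $\Phi_a^{~b}=\phi_a^{~b}$ and $\Theta_a^{~J}=\theta_a^{~J}$) gives \eqref{struct-cor}.

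Next I would specialize \eqref{struct-cor}. For $a>q$ and $J=j\le n$, all terms containing $\theta_a^{~k}$ or $\phi_a^{~\beta}$ vanish by the convention, as does $\psi_a^{~\beta}$, leaving $\Psi_a^{~\beta}\wedge\theta_\beta^{~j}=0$; combined with \eqref{psi-ag}, which gives $\Psi_a^{~\beta}\equiv0\bmod\phi$, and the linear independence of the $1$-forms $\theta_\beta^{~j},\phi_\gamma^{~\delta}$, this forces $\Psi_a^{~\beta}=0$ for $a>q$, i.e.\ \eqref{aq}. For $a=\alpha\le q$ and $J>n$, the terms with $\theta_\beta^{~J}$, $\omega_k^{~J}$ and $\sigma_\beta^{~J}$ drop out by the convention, leaving $\theta_\alpha^{~k}\wedge\Omega_k^{~J}+\phi_\alpha^{~\beta}\wedge\Sigma_\beta^{~J}=0$; since for each fixed $\alpha$ the forms $\theta_\alpha^{~k},\phi_\alpha^{~\beta}$ are $(1,0)$ and linearly independent, Cartan's Lemma gives $\Omega_k^{~J}\equiv0$ and $\Sigma_\beta^{~J}\equiv0$ modulo $\{\theta_\alpha,\phi_\alpha\}$. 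As these forms carry no $\alpha$, the same holds for every $\alpha\le q$, and intersecting two such spans (possible since $q\ge2$) forces $\Omega_j^{~K}=\Sigma_\alpha^{~K}=0$ for $K>n$. Assembling $\Phi_a^{~b}=\phi_a^{~b}$, $\Theta_a^{~J}=\theta_a^{~J}$, \eqref{aq} and these vanishings yields the proposition.

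The main difficulty is not in the present statement but upstream, namely in reaching \eqref{theaj}: this requires the structure-equation analysis of \S5 pinning down the residual $\eta$-freedom in $\Theta_a^{~J}$, followed by a concluding change of frame of the type discussed after Definition~\ref{changes}. Once that normalization is in hand, the proposition is merely a specialization of the structure equations together with Cartan's Lemma and the dimension count $q\ge2$.
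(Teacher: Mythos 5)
Your proposal is correct and follows essentially the same route as the paper: it fixes the sections produced by Lemma~\ref{eds} and the frame change giving \eqref{theaj}, derives \eqref{struct-cor} by subtracting the structure equations, and then specializes to $a>q$, $J\le n$ (using \eqref{psi-ag}) and to $\alpha\le q$, $J>n$ (using Cartan's Lemma and $q\ge2$) exactly as in Section~6. No gaps; the observation that the remaining identities all hold for the single pair of sections fixed after \eqref{theaj} is the same bookkeeping the paper relies on.
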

\medskip

\begin{remark}
Similarly to Remark~\ref{back1},
we can restrict to changing only the section of the second bundle
$\6B_{p',q'}\to S_{p',q'}$.
\end{remark}

\section{Embeddability in a plane of suitable dimension}

\begin{proposition}\Label{flatness}
Under the assumptions of Theorem~\ref{main}, 
there exist a $(p+q)$-dimensional subspace $V_1$
 and a $(q'-q)$-dimensional subspace $V_2$ in $\mathbb{C}^{p'+q'}$
 with $V_{1}\cap V_{2}=0$
 and such that $\langle\cdot,\cdot\rangle$ nondegenerate of signature $(p,q)$
 when restricted to $V_{1}$ and null when restricted to $V_{2}$
 such that 
$f(S_{p,q})\subset Gr(V_1, q)\oplus V_2$.
\end{proposition}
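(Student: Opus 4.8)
The plan is to produce, directly from the aligned frames of Proposition~\ref{eds-2}, two fixed subspaces of $\C^{p'+q'}$ and to read the required properties off the structure equations; everything is done on a connected open piece of $S_{p,q}$ over which the sections of Proposition~\ref{eds-2} are defined. Write $(Z_a,X_J,Y_a)$ for the section of $\6B_{p',q'}$ pulled back to $S_{p,q}$ via $f$, so that $Z'(x):=\sp\bigl(Z_1(x),\dots,Z_{q'}(x)\bigr)$ is $f(x)$, viewed as a point of $Gr(q',p'+q')$. I would consider the two Grassmann-valued functions
$$
V_2(x):=\sp\bigl(Z_{q+1}(x),\dots,Z_{q'}(x)\bigr),\qquad
\2V(x):=\sp\bigl(Z_1(x),\dots,Z_{q'}(x),\,X_1(x),\dots,X_n(x),\,Y_1(x),\dots,Y_q(x)\bigr),
$$
so that $V_2(x)\subseteq Z'(x)\subseteq\2V(x)$, $\dim V_2(x)=q'-q$ and $\dim\2V(x)=p+q'$.

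The first two steps show that $V_2$ and $\2V$ are \emph{constant}. From $dZ_a=\Psi_a^{~b}Z_b+\Theta_a^{~J}X_J+\Phi_a^{~b}Y_b$ and Proposition~\ref{eds-2}---recalling the range conventions $\theta_a^{~J}=0$ unless $a\le q,\,J\le n$ and $\phi_a^{~b}=0$ unless $a,b\le q$, so that $\Theta_a^{~J}=\theta_a^{~J}=0=\phi_a^{~b}=\Phi_a^{~b}$ for $a>q$, whereas $\Psi_a^{~b}=0$ for $a>q$ and $b\le q$---one gets $dZ_a=\sum_{b>q}\Psi_a^{~b}Z_b\in V_2$ for every $a>q$; hence the differential of $x\mapsto V_2(x)$ vanishes, so $V_2(x)$ equals a fixed subspace $V_2$, on which $\langle\cdot,\cdot\rangle$ is null because $V_2\subseteq Z'(x)\in S_{p',q'}$. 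For $\2V$ I would run the analogous term-by-term check, using the block form \eqref{pi} of the Maurer-Cartan matrix of the primed frame, the vanishing statements of Proposition~\ref{eds-2}, and the symmetry relations \eqref{symmetries}---which give in particular $\Theta_j^{~b}=-\1{\Theta_b^{~j}}=-\1{\theta_b^{~j}}=0$ and $\3\Psi_\a^{~b}=-\1{\Psi_b^{~\a}}=0$ for $b>q$---to see that $dZ_\a,\,dX_j,\,dY_\a\in\2V$ for all $\a\le q$, $j\le n$ (and $dZ_a\in V_2\subseteq\2V$ for $a>q$). Thus $\2V(x)$ equals a fixed subspace $\2V\supseteq V_2$ of dimension $p+q'$.

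Next I would compute $\langle\cdot,\cdot\rangle|_{\2V}$ from \eqref{structure}--\eqref{structure2} in the frame at a single point: each $Z_a$ with $a>q$ is orthogonal to every spanning vector of $\2V$ (indeed $\langle Z_a,Z_\G\rangle=0$ always, $\langle Z_a,X_j\rangle=0$, and $\langle Z_a,Y_\b\rangle=\delta_{ab}=0$ as $b\le q<a$), so $V_2$ lies in the radical of $\langle\cdot,\cdot\rangle|_{\2V}$; on the quotient $\2V/V_2$ the images of $Z_1,\dots,Z_q,X_1,\dots,X_n,Y_1,\dots,Y_q$ form a basis whose Gram matrix is $\left(\begin{smallmatrix}0&0&I_q\\0&I_n&0\\I_q&0&0\end{smallmatrix}\right)$, nondegenerate of signature $(n+q,q)=(p,q)$. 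Hence $V_2$ is exactly that radical, and I may take $V_1$ to be any fixed linear complement of $V_2$ in $\2V$; then $\dim V_1=p+q$, $V_1\cap V_2=0$, and $V_1\to\2V/V_2$ is an isometry, so $\langle\cdot,\cdot\rangle|_{V_1}$ is nondegenerate of signature $(p,q)$. Finally, for each $x$ one has $V_2\subseteq Z'(x)\subseteq\2V=V_1\oplus V_2$, whence $Z'(x)=(Z'(x)\cap V_1)\oplus V_2$; and since $Z'(x)+V_1\supseteq V_2+V_1=\2V$, a dimension count gives $\dim(Z'(x)\cap V_1)=q'+(p+q)-(p+q')=q$, so $W(x):=Z'(x)\cap V_1\in Gr(V_1,q)$ and $f(x)=Z'(x)=W(x)\oplus V_2\in Gr(V_1,q)\oplus V_2$, as required.

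Given Proposition~\ref{eds-2}, this is essentially bookkeeping. The two things to watch are the systematic use of the range conventions and of \eqref{symmetries}---in particular to obtain the vanishing of $\Theta_j^{~b}$ and $\3\Psi_\a^{~b}$ for $b>q$ needed to close the computation for $\2V$---and the observation that $V_2$ is exactly the radical of $\langle\cdot,\cdot\rangle|_{\2V}$, which both fixes the signature of $V_1$ and makes the choice of the complement $V_1$ essentially free: the inclusion of the moving $q$-planes $\sp\bigl(Z_1(x),\dots,Z_q(x)\bigr)$ in a single fixed $V_1$ is then automatic from $V_2\subseteq Z'(x)\subseteq\2V$. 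I do not anticipate any real obstacle beyond carrying out that term-by-term verification.
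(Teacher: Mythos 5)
Your argument is correct, and it uses exactly the ingredients of the paper's own proof: Proposition~\ref{eds-2}, the symmetry relations \eqref{symmetries} (to get $\Theta_J^{~b}=0$ and $\3\Psi_\a^{~b}=0$ for $b>q$), the range conventions, and the Maurer--Cartan equations for the pulled-back adapted frame. The difference is in the packaging. The paper fixes a constant reference frame, writes the moving frame as $A$ times it with $A=I$ at a base point, and kills the unwanted entries of $A$ (the components of the small-index frame vectors along $X_K$, $K>n$, and $Y_b$, $b>q$) by exhibiting a complete linear first-order system with zero initial data; the subspaces $V_1,V_2$ are then spanned by reference-frame vectors, so their signature and nullity come for free from \eqref{structure}. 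You instead show directly that the two moving spans $V_2(x)$ and $\2V(x)$ are constant --- the same ODE-uniqueness fact, in the invariant guise that a Grassmannian-valued map whose spanning vectors have derivatives inside the span is locally constant --- and then take $V_1$ to be an arbitrary complement of $V_2$ in $\2V$, identifying $V_2$ as the radical of $\langle\cdot,\cdot\rangle|_{\2V}$ to fix the signature, and using $V_2\subseteq Z'(x)\subseteq\2V$ to split $f(x)=(Z'(x)\cap V_1)\oplus V_2$. Your version makes the geometric content (constancy of two distinguished subspaces) more visible and trims the matrix bookkeeping, at the price of the extra linear-algebra step on the radical and the splitting; the paper's version pays with the explicit complete-system computation but gets the properties of $V_1,V_2$ immediately. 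Both are complete proofs of Proposition~\ref{flatness}.
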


\begin{proof}
Denote by $M\subset S_{p,q}$ the open subset where $f$ is defined.
Let $Z, X, Y$ be constant vector fields of $\mathbb{C}^{p'+q'}$ forming a $S_{p',q'}$-frame adapted to $M$ at a fixed reference point in $M$. Let
\begin{align}
\widetilde Z_a&=\lambda_a^{~b}Z_b+\eta_a^{~K}X_K+\zeta_a^{~b}Y_b, \Label{adapted1}\\
\widetilde X_J&=\lambda_J^{~b}Z_b+\eta_J^{~K}X_K+\zeta_J^{~b}Y_b,\Label{adapted2}\\
\widetilde Y_a&=\tilde\lambda_a^{~b}Z_b+\tilde\eta_a^{~K}X_K+\tilde\zeta_a^{~b}Y_b\Label{adapted3}
\end{align}
be an adapted $S_{p',q'}$-frame along $M$.
Write
\begin{equation}\Label{A}
A
=
\begin{pmatrix}
\lambda_a^{~b} & \eta_a^{~K} & \zeta_a^{~b}\\
\lambda_J^{~b} & \eta_J^{~K} & \zeta_J^{~b}\\
\tilde\lambda_a^{~b} & \tilde\eta_a^{~K} & \tilde\zeta_a^{~b}\\
\end{pmatrix},
\end{equation}
so that \eqref{adapted1} - \eqref{adapted3} take the form
\begin{equation}\Label{A-eq}
\begin{pmatrix}
\2Z\\ \2X\\ \2Y
\end{pmatrix}
=
A
\begin{pmatrix}
Z\\ X\\ Y
\end{pmatrix}
.
\end{equation}
Since $Z, X, Y$ form an adapted frame at a reference point of $M$, we may assume that
\begin{equation}\Label{initial}
A=I_{p'+q'}
\end{equation}
at the reference point.
Since $Z, X, Y$ are constant vector fields, i.e., $dZ=dX=dY=0$, 
differentiating \eqref{A-eq} and using \eqref{differential}
we obtain
\begin{equation}\Label{dA}
dA=\Pi A,
\end{equation}
where $\Pi$ is the connection matrix of $S_{p',q'}$, i.e.\
\begin{equation}\Label{dA-expand}
dA=
\begin{pmatrix}
\Psi_{a}^{~b} & \Theta_{a}^{~J} & \Phi_{a}^{~b}\\
\Sigma_{K}^{~b} & \Omega_{K}^{~J} & \Theta_{K}^{~b}\\
\Xi_{a}^{~b} & \Sigma_{a}^{~J} & \3\Psi_{a}^{~b}\\
\end{pmatrix}
A.
\end{equation}

Next, it follows from Proposition~\ref{eds-2} that
\begin{equation}\Label{dZa}
d\2Z_a= \sum_{b>q}\Psi_{a}^{~b} \2Z_{b}, \quad a>q,
\end{equation}
in particular, 
the span of $\2Z_{a}$, $a>q$, 
is independent of the point in $M$.
Hence together with \eqref{adapted1} and \eqref{initial}, we conclude
\begin{equation}\Label{Z_a-determined}
\eta_a^{~K}=\zeta_a^{~b}=0, \quad a>q.
\end{equation}

Furthermore, \eqref{dA-expand} implies
\begin{equation}
\begin{pmatrix}
d\eta_{a}^{~K} \\ d\eta_{J}^{~K} \\ d\tilde\eta_{a}^{~K}
\end{pmatrix}
=
\begin{pmatrix}
\Psi_{a}^{~b} & \Theta_{a}^{~L} & \Phi_{a}^{~b}\\
\Sigma_{J}^{~b} & \Omega_{J}^{~L} & \Theta_{J}^{~b}\\
\Xi_{a}^{~b} & \Sigma_{a}^{~L} & \3\Psi_{a}^{~b}\\
\end{pmatrix}
\begin{pmatrix}
\eta_{b}^{~K} \\ \eta_{L}^{~K} \\ \tilde\eta_{b}^{~K}
\end{pmatrix}.
\end{equation}
In particular, restricting to $a=\a\le q$ and $J=j\le n$ and using 
Proposition~\ref{eds-2} (together with the symmetry relations analogous to \eqref{symmetries}) we obtain
\begin{equation}\Label{d-eta}
\begin{pmatrix}
d\eta_{\a}^{~K} \\ d\eta_{j}^{~K} \\ d\tilde\eta_{\a}^{~K}
\end{pmatrix}
=
\begin{pmatrix}
\Psi_{\a}^{~b} & \theta_{\a}^{~L} & \phi_{\a}^{~b}\\
\Sigma_{j}^{~b} & \Omega_{j}^{~L} & \theta_{j}^{~b}\\
\Xi_{\a}^{~b} & \Sigma_{\a}^{~L} & \3\Psi_{\a}^{~b}\\
\end{pmatrix}
\begin{pmatrix}
\eta_{b}^{~K} \\ \eta_{L}^{~K} \\ \tilde\eta_{b}^{~K}
\end{pmatrix}.
\end{equation}
Now with \eqref{Z_a-determined} and Proposition~\ref{eds-2} taken into account,  \eqref{d-eta} becomes
\begin{equation}
\begin{pmatrix}
d\eta_{\a}^{~K} \\ d\eta_{j}^{~K} \\ d\tilde\eta_{\a}^{~K}
\end{pmatrix}
=
\begin{pmatrix}
\Psi_{\a}^{~\b} & \theta_{\a}^{~l} & \phi_{\a}^{~\b}\\
\Sigma_{j}^{~\b} & \Omega_{j}^{~l} & \theta_{j}^{~\b}\\
\Xi_{\a}^{~\b} & \Sigma_{\a}^{~l} & \3\Psi_{\a}^{~\b}\\
\end{pmatrix}
\begin{pmatrix}
\eta_{\b}^{~K} \\ \eta_{l}^{~K} \\ \tilde\eta_{\b}^{~K}
\end{pmatrix}.
\end{equation}
Repeating the above argument for $\zeta$ instead of $\eta$,
we obtain
\begin{equation}
\begin{pmatrix}
d\zeta_{\a}^{~b} \\ d\zeta_{j}^{~b} \\ d\tilde\zeta_{\a}^{~b}
\end{pmatrix}
=
\begin{pmatrix}
\Psi_{\a}^{~\b} & \theta_{\a}^{~l} & \phi_{\a}^{~\b}\\
\Sigma_{j}^{~\b} & \Omega_{j}^{~l} & \theta_{j}^{~\b}\\
\Xi_{\a}^{~\b} & \Sigma_{\a}^{~l} & \3\Psi_{\a}^{~\b}\\
\end{pmatrix}
\begin{pmatrix}
\zeta_{\b}^{~b} \\ \zeta_{l}^{~b} \\ \tilde\zeta_{\b}^{~b}
\end{pmatrix}.
\end{equation}
Thus each of the vector valued functions 
$\eta^K:=(\eta_\alpha^{~K}, \eta_j^{~K}, \tilde\eta_\alpha^{~K})$ for
a fixed $K$  and 
$\zeta^b:=(\zeta_\alpha^{~b}, \zeta_j^{~b}, \tilde\zeta_\alpha^{~b})$ for  a fixed 
$b$
 satisfies a complete system of linear first order differential equations.
Then by the initial condition \eqref{initial} and the uniqueness of solutions, we conclude, in particular, that
\begin{equation}
\eta^{~K}=\zeta^{~b}=0,
\quad K>n, \, b>q.
\end{equation}
Hence \eqref{A-eq} implies
\begin{align}
\widetilde Z_\alpha&=\lambda_\alpha^{~b}Z_b+\eta_\alpha^{~k}X_k+\zeta_\alpha^{~\beta}Y_\beta.
\Label{Z-al} 
\end{align}

Now setting
\begin{equation}
\3Z_{\a}:= \2Z_{\a} - \sum_{b>q} \l_{\a}^{b} Z_{b},
\end{equation}
we still have
\begin{equation}
\sp \{ \3Z_{\a}, \2Z_{q+1},\ldots, \2Z_{q'}\} =  \sp \{ \2Z_{a} \},
\end{equation}
whereas \eqref{Z-al} becomes
\begin{equation}
\3Z_\alpha=\lambda_\alpha^{~\beta}Z_\beta+\eta_\alpha^{~k}X_k+\zeta_\alpha^{~\beta}Y_\beta,
\end{equation}
implying
$$\sp \{\3Z_{\a}\} \subset \sp  \{ Z_1,\ldots,Z_q,X_1,\ldots,X_n, Y_1,\ldots,Y_q\}.
$$
Then together with 
\eqref{dZa} we conclude that
$$f(M)=\sp\{ \2Z_{a}\}  
= \sp\{\3Z_{\a}\} \oplus \sp\{\2Z_{q+1},\ldots, \2Z_{q'} \}  $$
$$ = \sp\{\3Z_{\a}\} \oplus \sp\{Z_{q+1},\ldots, Z_{q'} \}  
 \subset Gr(V_1, q)\oplus V_2,$$
 where 
\begin{equation}
V_1=\sp\{ Z_1,\ldots,Z_q,X_1,\ldots,X_n, Y_1,\ldots,Y_q\},\quad
V_2=\sp\{Z_{q+1},\ldots,Z_{q'}\}.
\end{equation}
\end{proof}

\section{Rigidity of CR embeddings from $S_{p,q}$ to $S_{p',q'}$}

As consequence of Proposition~\ref{flatness}, we conclude that,
after a linear change of coordinates, 
$f(M)$ locally coincides with $S_{p,q}$ linearly embedded into $S_{p',q'}$.
Identifying $M=S_{p,q}$ with its image, $f$
becomes a local CR-automorphism of $M$.
Then by a theorem of Kaup-Zaitsev \cite[Theorem~4.5]{KZ06},
$f$ is a restriction of a global CR-automorphism of $S_{p,q}$.
Furthermore, by \cite[Theorem~8.5]{KZ00},
$f$ extends to a biholomorphic automorphism
of the bounded symmetric domain
and the rigidity follows.

\end{document}